\newtheorem{theorem}{Theorem}[section]
\theoremstyle{plain}
\newtheorem{lemma}[theorem]{Lemma}
\newtheorem{proposition}[theorem]{Proposition}
\newtheorem{remark}{Remark}
\numberwithin{equation}{section}
\begin{document}
\title[Compacta with Shapes of Finite Complexes]{Compacta with Shapes of Finite Complexes: a direct approach to the
Edwards-Geoghegan-Wall obstruction}
\author{Craig R. Guilbault}
\address{Department of Mathematical Sciences\\
University of Wisconsin-Milwaukee, Milwaukee, WI 53201}
\email{craigg@uwm.edu}
\thanks{Work on this project was aided by a Simons Foundation Collaboration Grant.}
\date{August 21, 2015}
\keywords{shape, stability, finiteness obstruction, projective class group}

\begin{abstract}
An important \textquotedblleft stability\textquotedblright\ theorem in shape
theory, due to D.A. Edwards and R. Geoghegan, characterizes those compacta
having the same shape as a finite CW complex. In this note we present
straightforward and self-contained proof of that theorem.

\end{abstract}
\maketitle

\section{Introduction}

Before Ross Geoghegan turned his attention to the main topic of these
proceedings, \emph{Topological Methods in Group Theory}, he was a leader in
the area of shape theory. In fact, much of his pioneering work in geometric
group theory has involved taking key ideas from shape theory and recasting
them in the service of groups. Some of his early thoughts on that point of
view are captured nicely in \cite{Ge2}. Among the interesting ideas found in
that 1986 paper is an early recognition that a group boundary is well-defined
up to shape---an idea later formalized by Bestvina in \cite{Be}.

In this paper we return to the subject of Geoghegan's early work. For those
whose interests lie primarily in group theory, the work presented here
contains a concise and fairly gentle introduction to the ideas of shape
theory, via a careful study of one of its foundational questions.\medskip

In the 1970's D.A. Edwards and R. Geoghegan solved two open problems in shape
theory---both related to the issue of \textquotedblleft
stability\textquotedblright. Roughly speaking, these problems ask when a
\textquotedblleft bad\textquotedblright\ space has the same shape as a
\textquotedblleft good\textquotedblright\ space. For simplicity, we focus on
the following versions of those problems:\bigskip

\noindent\textbf{Problem A.} \emph{Give necessary and sufficient conditions
for a connected finite-dim\-en\-sion\-al compactum }$Z$\emph{\ to have the
pointed shape of a CW complex.\bigskip}

\noindent\textbf{Problem B.} \emph{Give necessary and sufficient conditions
for a connected finite-dim\-en\-sion\-al compactum }$Z$\emph{\ to have the
pointed shape of a finite CW complex.\bigskip}

Solutions to these problems can be found in the sequence of papers:
\cite{EG1}, \cite{EG2}, \cite{EG3}. A pair of particularly nice versions of
those solutions are as follows:\bigskip

\noindent\textbf{Solution A.} $Z$\emph{\ has the pointed shape of a CW
complex\ if and only if each of its homotopy pro-groups is stable.\bigskip}

\noindent\textbf{Solution B.} $Z$\emph{\ has the pointed shape of a finite CW
complex\ if and only if each of its homotopy pro-groups is stable and an
intrinsically defined Wall obstruction }$\omega\left(  Z,z\right)
\in\widetilde{K}_{0}\left(
\mathbb{Z}
\mathbb{[}\check{\pi}_{1}\left(  Z,z\right)  ]\right)  $
\emph{vanishes.\bigskip}

Solution B was obtained by combining Solution A with C.T.C. Wall's famous work
on finite homotopy types \cite{Wa}. So, in order to understand Edwards and
Geoghegan's solution to Problem B, it is necessary to understand two things:
Solution A; and Wall's work on the finiteness obstruction. Since both tasks
are substantial---and since Problem B can arise quite naturally without
regards to Problem A---we became interested in finding a simpler and more
direct solution to Problem B. This note contains such a solution. This paper
may be viewed as a sequel to \cite{Ge1}, where Geoghegan presented a new and
more elementary solution to Problem A. In the same spirit, we feel that our
work offers a simplified view of Problem B.

The strategy we use in attacking Problem B is straightforward and very
natural. Given a connected $n$-dimensional pointed compactum $Z$, begin with
an inverse system $K_{0}\overset{f_{1}}{\longleftarrow}K_{1}\overset{f_{2}%
}{\longleftarrow}K_{2}\overset{f_{3}}{\longleftarrow}\cdots$ of finite
$n$-dimensional (pointed) complexes with (pointed) cellular bonding maps that
represents $Z$. Under the assumption that $\operatorname*{pro}$-$\pi_{k}$ is
stable for all $k$, we borrow a technique from \cite{Fe1} allowing us to
attach cells to the $K_{i}$'s so that the bonding maps induce $\pi_{k}%
$-isomorphisms for increasingly large $k$. Our goal then is to reach a
\emph{finite} stage where the bonding maps induce $\pi_{k}$-isomorphisms for
all $k$, and are therefore homotopy equivalences. This would imply that $Z$
has the shape of any of those homotopy equivalent finite complexes. As
expected, we confront an obstruction lying in the reduced projective class
group of $\operatorname*{pro}$-$\pi_{1}$. Instead of invoking theorems from
\cite{Wa}, we uncover this obstruction in the natural context of the problem
at hand; in fact, the main result of \cite{Wa} can then be obtained as a
corollary. Another advantage to the approach taken here is that all CW
complexes used in this paper are finite. This makes both the algebra and the
shape theory more elementary.

\section{Background}

In this section we provide some background information on inverse systems,
inverse sequences, and shape theory. In addition, we will review the
definition of a reduced projective class group. A more complete treatment of
inverse systems and sequences can be found in \cite{Ge3}; an expanded version
of this introduction can be found in \cite{Gu}

\subsection{Inverse systems}

We provide a brief discussion of general inverse systems and pro-categories,
which provide the broad framework for more concrete constructions that will
follow. A thorough treatment of this topic can be found in \cite[Ch.11]{Ge3}.

An \emph{inverse system} $\left\{  X_{\alpha},f_{\alpha}^{\beta}%
;\mathcal{A}\right\}  _{\alpha\in\mathcal{A}}$ consists of a collection of
objects $X_{\alpha}$ from a category $\mathcal{C}$ indexed by a \emph{directed
set} $\mathcal{A}$, along with morphisms $f_{\alpha}^{\beta}:X_{\beta
}\rightarrow X_{\alpha}$ for every pair $\alpha,\beta\in\mathcal{A}$ with
$\alpha\leq\beta$, satisfying the property that $f_{\alpha}^{\gamma}=f_{\beta
}^{\gamma}\circ f_{\alpha}^{\beta}$ whenever $\alpha\leq\beta\leq\gamma$. By
fixing $\mathcal{C}$, but allowing the directed set to vary, and formulating
an appropriate definition of morphisms, one obtains a category
$\operatorname*{pro}$-$\mathcal{C}$ whose objects are all such inverse
systems. When $\mathcal{A}^{\prime}\subseteq\mathcal{A}$ is a directed set
there is an obvious subsystem $\left\{  X_{\alpha},f_{\alpha}^{\beta
};\mathcal{A}^{\prime}\right\}  _{\alpha\in\mathcal{A}^{\prime}}$ and an
inclusion morphism. When $\mathcal{A}^{\prime}$ is \emph{cofinal }in\emph{
}$\mathcal{A}$ (for every $\alpha\in\mathcal{A}$ there exists $\alpha^{\prime
}\in\mathcal{A}^{\prime}$ such that $\alpha\leq\alpha^{\prime}$), the
inclusion morphism is an isomorphism in $\operatorname*{pro}$-$\mathcal{C}$. A
key theme in this subject is that, when $\mathcal{A}^{\prime}$ is cofinal, the
corresponding subsystem contains all relevant information.

When $\mathcal{C}$ is a category of sets and functions, we may define the
\emph{inverse limit} of $\left\{  X_{\alpha},f_{\alpha}^{\beta};\mathcal{A}%
\right\}  _{\alpha\in\mathcal{A}}$ by%
\[
\underleftarrow{\lim}\left\{  X_{\alpha},f_{\alpha}^{\beta};\mathcal{A}%
\right\}  =\left\{  \left.  \left(  x_{\alpha}\right)  \in\prod_{\alpha
\in\mathcal{A}}X_{\alpha}\right\vert f_{\alpha}^{\beta}\left(  x_{\beta
}\right)  =x_{\alpha}\text{ for all }\alpha\leq\beta\right\}
\]
along with projections $p_{\alpha}:\underleftarrow{\lim}\left\{  X_{\alpha
},f_{\alpha}^{\beta};\mathcal{A}\right\}  \rightarrow X_{\alpha}$. When
$\mathcal{C}$ is made up of topological spaces and maps, the inverse limits
are topological spaces and the projections are continuous. Similarly,
additional structure is passed along to inverse limits when $\mathcal{C}$
consists of groups, rings, or modules and corresponding homomorphisms. An
important example of the \textquotedblleft key theme\textquotedblright\ noted
in the previous paragraph is that, when $\mathcal{A}^{\prime}$ is is cofinal
in $\mathcal{A}$, the canonical inclusion $\underleftarrow{\lim}\left\{
X_{\alpha},f_{\alpha}^{\beta};\mathcal{A}^{\prime}\right\}  \rightarrow
\underleftarrow{\lim}\left\{  X_{\alpha},f_{\alpha}^{\beta};\mathcal{A}%
^{\prime}\right\}  $ is a bijection of sets [resp., homeomorphism of spaces,
isomorphism of groups, etc.].

An \emph{inverse sequence} (or \emph{tower}) is an inverse system for which
$\mathcal{A}=%
\mathbb{N}
$, the natural numbers. Since all inverse systems used in this paper contain
cofinal inverse sequences, we are able to work almost entirely with towers.
General inverse systems play a useful, but mostly invisible, background role.

\subsection{Inverse sequences (aka towers)}

The fundamental notions that make up a category $\operatorname*{pro}%
$-$\mathcal{C}$ are simpler and more intuitive when restricted the the
subcategory of towers in $\mathcal{C}$. For our purposes, an understanding of
towers will suffice; so that is where we focus our attention.

Let
\[
C_{0}\overset{\lambda_{1}}{\longleftarrow}C_{1}\overset{\lambda_{2}%
}{\longleftarrow}C_{2}\overset{\lambda_{3}}{\longleftarrow}\cdots
\]
be an inverse sequence in $\operatorname*{pro}$-$\mathcal{C}$. A
\emph{subsequence} of $\left\{  C_{i},\lambda_{i}\right\}  $ is an inverse
sequence of the form%

\[
\begin{diagram}
C_{i_{0}} & \lTo^{\lambda_{i_{0}+1}\circ\cdots\circ\lambda_{i_{1}}
} & C_{i_{1}} & \lTo^{\lambda_{i_{1}+1}\circ\cdots\circ
\lambda_{i_{2}}} & C_{i_{2}} & \lTo^{\lambda_{i_{2}+1}\circ
\cdots\circ\lambda_{i_{3}}} & \cdots.
\end{diagram}
\]

\noindent In the future we will denote a composition $\lambda_{i}\circ
\cdots\circ\lambda_{j}$ ($i\leq j$) by $\lambda_{i,j}$.

\begin{remark}
\emph{Using the notation introduced in the previous subsection, a bonding map
}$\lambda_{i}$\emph{ would be labeled }$\lambda_{i-1}^{i}$\emph{ and a
composition }$\lambda_{i}\circ\cdots\circ\lambda_{j}$\emph{ (}$i\leq j$\emph{)
by }$\lambda_{i-1}^{j}$\emph{. When working with inverse sequences, we opt for
the slightly simpler notation described here.}
\end{remark}

Inverse sequences $\left\{  C_{i},\lambda_{i}\right\}  $ and $\left\{
D_{i},\mu_{i}\right\}  $ are isomorphic in $\operatorname*{pro}$-$\mathcal{C}%
$, or \emph{pro-isomorphic}, if after passing to subsequences, there exists a
commuting \emph{ladder diagram}:%
\begin{equation}
\begin{diagram} C_{i_{0}} & & \lTo^{\lambda_{i_{0}+1,i_{1}}} & & C_{i_{1}} & & \lTo^{\lambda_{i_{1}+1,i_{2}}} & & C_{i_{2}} & & \lTo^{\lambda_{i_{2}+1,i_{3}}}& & C_{i_{3}}& \cdots\\ & \luTo & & \ldTo & & \luTo & & \ldTo & & \luTo & & \ldTo &\\ & & D_{j_{0}} & & \lTo^{\mu_{j_{0}+1,j_{1}}} & & D_{j_{1}} & & \lTo^{\mu_{j_{1}+1,j_{2}}}& & D_{j_{2}} & & \lTo^{\mu_{j_{2}+1,j_{3}}} & & \cdots \end{diagram} \label{ladder diagram}%
\end{equation}
where the up and down arrows represent\ morphisms from $\mathcal{C}$. Clearly
an inverse sequence is pro-isomorphic to any of its subsequences. To avoid
tedious notation, we frequently do not distinguish $\left\{  C_{i},\lambda
_{i}\right\}  $ from its subsequences. Instead we simply assume that $\left\{
C_{i},\lambda_{i}\right\}  $ has the desired properties of a preferred
subsequence---often prefaced by the words \textquotedblleft after passing to a
subsequence and relabelling\textquotedblright.

\begin{remark}
\emph{Together the collection of down arrows in (\ref{ladder diagram})
determine a morphism in }$\operatorname*{pro}$\emph{-}$C$\emph{ from
}$\left\{  C_{i},\lambda_{i}\right\}  $\emph{ to }$\left\{  D_{i},\mu
_{i}\right\}  $\emph{ and the up arrows a morphism from }$\left\{  D_{i}%
,\mu_{i}\right\}  $\emph{ to }$\left\{  C_{i},\lambda_{i}\right\}  $\emph{.
Again see \cite[Ch.11]{Ge3} for details.}
\end{remark}

An inverse sequence $\left\{  C_{i},\lambda_{i}\right\}  $ is \emph{stable} if
it is pro-isomorphic to a constant sequence
\[
D\overset{\operatorname*{id}}{\longleftarrow}D\overset{\operatorname*{id}%
}{\longleftarrow}D\overset{\operatorname*{id}}{\longleftarrow}\cdots.
\]
For example, if each $\lambda_{i}$ is an isomorphism from $\mathcal{C}$, it is
easy to show that $\left\{  C_{i},\lambda_{i}\right\}  $ is stable.

Inverse limits of an inverse sequences of sets are particularly easy to
understand. In particular,
\[
\underleftarrow{\lim}\left\{  C_{i},\lambda_{i}\right\}  =\left\{  \left.
\left(  c_{0},c_{1},c_{2},\cdots\right)  \in\prod_{i=0}^{\infty}%
C_{i}\right\vert \lambda_{i}\left(  c_{i}\right)  =c_{i-1}\text{ for all
}i\geq1\right\}  ,
\]
with a projection map $p_{i}:\underleftarrow{\lim}\left\{  C_{i},\lambda
_{i}\right\}  \rightarrow C_{i}$ for each $i\geq0.$

\subsection{Inverse sequences of groups}

Of particular interest to us is the category $\mathcal{G}$ of groups and group
homomorphisms. It is easy to show that an inverse sequence of groups $\left\{
G_{i},\lambda_{i}\right\}  $ is stable if and only if, after passing to a
subsequence and relabelling, there is a commutative diagram of the form
\[
\begin{diagram}
G_{0}& & \lTo^{{\lambda}_{1}} & & G_{1} & & \lTo^{{\lambda}_{2}} & &
G_{2} & & \lTo^{{\lambda}_{3}} & & G_{3} &\cdots\\
& \luTo & & \ldTo & & \luTo & & \ldTo   & & \luTo & & \ldTo & \\
& & \operatorname{Im}\left(  \lambda_{1}\right) & & \lTo^{\cong} & & \operatorname{Im}\left(  \lambda
_{2}\right) & &\lTo^{\cong} & & \operatorname{Im}\left(  \lambda_{3}\right) & & \lTo^{\cong} & &\cdots & \\
\end{diagram}
\]
where all unlabeled maps are inclusions or restrictions. In this case
$\underleftarrow{\lim}\left\{  C_{i},\lambda_{i}\right\}  \cong%
\operatorname*{im}(\lambda_{i})$ and each projection homomorphism takes
$\underleftarrow{\lim}\left\{  C_{i},\lambda_{i}\right\}  $ isomorphically
onto the corresponding $\operatorname*{im}(\lambda_{i})$.

The sequence $\left\{  G_{i},\lambda_{i}\right\}  $ is \emph{semistable} (or
\emph{Mittag-Leffler }or \emph{pro-epimorphic}) if it is pro-isomorphic to an
inverse sequence $\left\{  H_{i},\mu_{i}\right\}  $ for which each $\mu_{i}$
is surjective. Equivalently, $\left\{  G_{i},\lambda_{i}\right\}  $ is
semistable if, after passing to a subsequence and relabelling, there is a
commutative diagram of the form
\[
\begin{diagram}
G_{0}& & \lTo^{{\lambda}_{1}} & & G_{1} & & \lTo^{{\lambda}_{2}} & &
G_{2} & & \lTo^{{\lambda}_{3}} & & G_{3} &\cdots\\
& \luTo & & \ldTo & & \luTo & & \ldTo   & & \luTo & & \ldTo & \\
& & \operatorname{Im}\left(  \lambda_{1}\right) & & \lOnto & & \operatorname{Im}\left(  \lambda
_{2}\right) & &\lOnto & & \operatorname{Im}\left(  \lambda_{3}\right) & & \lOnto & &\cdots & \\
\end{diagram}
\]
where \textquotedblleft$\twoheadleftarrow$\textquotedblright\ denotes a surjection.

\subsection{Inverse systems and sequences of CW complexes}

Another category of utmost interest to us is $\mathcal{FH}_{0}$, the category
of pointed, connected, finite CW complexes and pointed homotopy classes of
maps. (A space is \emph{pointed} if a basepoint has been chosen; a map is
\emph{pointed} if basepoint is taken to basepoint.) We will frequently refer
to pointed spaces and maps without explicitly mentioning the basepoints. We
will refer to an inverse system [resp., tower] from $\mathcal{FH}_{0}$ as an
\emph{inverse system} [resp., \emph{tower}]\emph{ of finite complexes.}

For each $k\geq1$, there is an obvious functor from $\operatorname*{pro}%
$-$\mathcal{FH}_{0}$ to $\operatorname*{pro}$-$\mathcal{G}$ taking an inverse
system $\left\{  K_{\alpha},g_{\alpha}^{\beta};\Omega\right\}  $ of pointed,
connected, finite simplicial complexes to be the inverse system of groups
$\left\{  \pi_{k}(K_{\alpha}),(g_{\alpha}^{\beta})_{\ast};\Omega\right\}  $
(the $k^{\text{th}}$ \emph{homotopy pro-group} \emph{of }$\left\{  K_{\alpha
},g_{\alpha}^{\beta};\Omega\right\}  $). A related functor takes $\left\{
K_{\alpha},g_{\alpha}^{\beta};\Omega\right\}  $ to the group $\underleftarrow
{\lim}\left\{  \pi_{k}(K_{\alpha}),(g_{\alpha}^{\beta})_{\ast};\Omega\right\}
$ which we denote $\check{\pi}_{k}\left(  \left\{  \pi_{k}(K_{\alpha
}),(g_{\alpha}^{\beta})_{\ast};\Omega\right\}  \right)  $ (the $k^{\text{th}}$
\emph{\v{C}ech homotopy group} \emph{of }$\left\{  K_{\alpha},g_{\alpha
}^{\beta};\Omega\right\}  $).

Clearly the initial functor described above takes towers from
$\operatorname*{pro}$-$\mathcal{FH}_{0}$ to towers in $\operatorname*{pro}%
$-$\mathcal{G}$, while the latter takes each tower to a group.

\subsection{Homotopy dimension}

The \emph{dimension}, $\dim(\left\{  K_{i},f_{i}\right\}  )$, of a tower of
finite complexes is the supremum (possibly $\infty$) of the dimensions of the
$K_{i}$'s. The \emph{homotopy dimension }of $\left\{  K_{i},f_{i}\right\}  $
is defined by:
\[
\hom\dim\left(  \left\{  K_{i},f_{i}\right\}  \right)  =\inf\{\left.
\dim(\left\{  L_{i},g_{i}\right\}  )\right\vert \left\{  L_{i},g_{i}\right\}
\text{ is pro-isomorphic to }\left\{  K_{i},f_{i}\right\}  \}.
\]

\subsection{Shapes of compacta\label{Subsection: shapes of compacta}}

Our view of \emph{shape theory }is that it is the study of (possibly bad)
compact metric spaces through the use of associated inverse systems and
sequences of finite complexes.

Let $Z$ be a compact, connected, metric space with basepoint $z$. Let $\Omega$
denote the set of all finite open covers $\mathcal{U}_{\alpha}$ of $Z$, each
with a distinguished element $U^{\ast}$ containing $z$. Declare $\mathcal{U}%
_{\alpha}\leq\mathcal{U}_{\beta}$ to mean that $\mathcal{U}_{\beta}$ refines
$\mathcal{U}_{\alpha}$. Using Lebesgue numbers, it is easy to see that
$\Omega$ is a directed set. For each $\mathcal{U}_{\alpha}$, let $N_{\alpha}$
be its nerve, and for each $\mathcal{U}_{\alpha}\leq\mathcal{U}_{\beta}$ let
$g_{\alpha}^{\beta}:N_{\beta}\rightarrow N_{\alpha}$ be (the pointed homotopy
class of) an induced simplicial map. In this way, we associate to $Z$ an
inverse system $\left\{  N_{\alpha},g_{\alpha}^{\beta};\Omega\right\}  $ from
$\operatorname*{pro}$-$\mathcal{FH}_{0}$. We may then define
$\operatorname*{pro}$-$\pi_{k}\left(  Z\right)  $ (the $k^{\text{th}}$
\emph{pro-homotopy group} \emph{of }$Z$) to be the inverse system $\left\{
\pi_{1}(N_{\alpha}),(g_{\alpha}^{\beta})_{\ast};\Omega\right\}  $ and
$\check{\pi}_{k}\left(  Z\right)  $ (the $k^{\text{th}}$ \emph{\v{C}ech
homotopy group} \emph{of }$Z$) its inverse limit.

Any cofinal tower contained in the above inverse system will be called a\emph{
tower of finite complexes associated to }$Z$. Another application of Lebesgue
numbers shows that such towers always exist. We say that $Z$ and $Z^{\prime}$
have the same \emph{pointed shape} if their associated towers are
pro-isomorphic. The \emph{shape dimension} of $Z$ is defined to be the
homotopy dimension of an associated tower. It is easy to see that the shape
dimension of $Z$ is less than or equal to its topological
dimension.\footnote{Another method for associating a tower of finite complexes
to $Z$ is to realize $Z$ as the inverse limit of such complexes. It is a
standard fact in shape theory that such a sequence will be pro-isomorphic to
the ones obtained above. See, for example, \cite{Bo} or \cite{MS}.}

Since associated towers $\left\{  N_{i},g_{i}\right\}  $ for $Z$ are, by
definition, cofinal subsystems of $\left\{  N_{\alpha},g_{\alpha}^{\beta
};\Omega\right\}  $, each comes with a canonical isomorphism%
\[
j:\check{\pi}_{k}\left(  \left\{  N_{i},g_{i}\right\}  \right)
=\underleftarrow{\lim}\left\{  \pi_{k}(N_{i}),(g_{i})_{\ast}\right\}
\rightarrow\underleftarrow{\lim}\left\{  \pi_{k}(N_{\alpha}),(g_{\alpha
}^{\beta})_{\ast};\Omega\right\}  \equiv\check{\pi}_{k}\left(  Z\right)  .
\]

\subsection{The reduced projective class group}

If $\Lambda$ is a ring, we say that two finitely generated projective
$\Lambda$-modules $P$ and $Q$ are \emph{stably equivalent} if there exist
finitely generated free $\Lambda$-modules $F_{1\text{ }}$ and $F_{2}$ such
that $P\oplus F_{1}\cong Q\oplus F_{2}$. Under the operation of direct sum,
the stable equivalence classes of finitely generated projective modules form a
group $\widetilde{K}_{0}\left(  \Lambda\right)  $, known as the \emph{reduced
projective class group} of $\Lambda$. In this group, a finitely generated
projective $\Lambda$-module $P$ represents the trivial element if and only if
it is \emph{stably free}, i.e., there exists a finitely generated free
$\Lambda$-module $F$ such that $P\oplus F$ is free.

Of particular interest is the case where $G$ is a group and $\Lambda$ is the
group ring $%
\mathbb{Z}
\left[  G\right]  $. Then $\widetilde{K}_{0}$ determines a functor from the
category $\mathcal{G}$ of groups to the category $\mathcal{AG}$ of abelian
groups. In particular, a group homomorphism $\lambda:G\rightarrow H$ induces a
ring homomorphism $%
\mathbb{Z}
\left[  G\right]  \rightarrow%
\mathbb{Z}
\left[  H\right]  $, which induces a group homomorphism $\lambda_{\ast
}:\widetilde{K}_{0}\left(
\mathbb{Z}
\left[  G\right]  \right)  \rightarrow\widetilde{K}_{0}\left(
\mathbb{Z}
\left[  H\right]  \right)  $.

\section{Main Results}

We are now ready to state and prove the main results of this paper.

\begin{theorem}
\label{main}Let $\left\{  K_{i},f_{i}\right\}  $ be a finite-dimensional tower
of pointed, connected, finite complexes having stable $\operatorname*{pro}%
$-$\pi_{k}$ for all $k$. Then there is a well defined obstruction
$\omega\left(  \left\{  K_{i},f_{i}\right\}  \right)  \in\widetilde{K}%
_{0}\left(
\mathbb{Z}
\mathbb{[}\check{\pi}_{1}\left(  \left\{  K_{i},f_{i}\right\}  )]\right)
\right)  $ which vanishes if and only if $\left\{  K_{i},f_{i}\right\}  $\ is
stable in $\operatorname*{pro}$-$\mathcal{FH}_{0}$.
\end{theorem}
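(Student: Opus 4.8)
The plan is to follow the "attach cells to make bonding maps into isomorphisms" strategy outlined in the introduction, carrying along an algebraic accounting of what fails to be free. I would first use the stability of $\operatorname{pro}$-$\pi_k$ together with the finite-dimensionality to reduce, via the technique borrowed from \cite{Fe1}, to a tower in which the bonding maps $f_i$ induce $\pi_1$-isomorphisms (and, after further passage to a subsequence and attaching cells, $\pi_k$-isomorphisms in a range that grows with $i$). Fixing the common fundamental group $\pi=\check\pi_1(\{K_i,f_i\})$, I pass to universal covers to obtain a tower of free $\mathbb{Z}[\pi]$-chain complexes and study the mapping cones (relative chains) $C_*(\widetilde{K}_{i},\widetilde{K}_{i-1})$ of the bonding maps. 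Once the maps are highly connected, these relative complexes are acyclic except in the top dimensions, and standard homological algebra over $\mathbb{Z}[\pi]$ lets me replace them by finitely generated projective modules concentrated in a single degree.

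Next I would define the obstruction. Because each $K_i$ is finite, each relative chain group is finitely generated free over $\mathbb{Z}[\pi]$; after killing the low-dimensional homology by attaching cells, the only surviving obstruction to making a bonding map a homotopy equivalence is the class of a single finitely generated projective module $P_i$ (a "kernel" or "Wall-type" module) in $\widetilde{K}_0(\mathbb{Z}[\pi])$. I would set $\omega(\{K_i,f_i\})$ to be the appropriately stabilized limit of these classes $[P_i]$, and the first order of business is to check that this is well defined: independent of the choices of cell attachments, independent of the chosen subsequence, and invariant under pro-isomorphism. Well-definedness under passage to subsequences follows because composing consecutive bonding maps adds the relevant projective classes, and stability of $\operatorname{pro}$-$\pi_1$ guarantees the identifications of the group rings are compatible, so the class in $\widetilde{K}_0$ is genuinely a pro-isomorphism invariant.

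For the equivalence itself, the "only if" direction is the easier half: if $\{K_i,f_i\}$ is stable in $\operatorname{pro}$-$\mathcal{FH}_0$, it is pro-isomorphic to a constant tower on a finite complex, the bonding maps can be taken to be homotopy equivalences, the relative chain complexes become contractible over $\mathbb{Z}[\pi]$, the modules $P_i$ become stably free, and hence $\omega=0$. The substantive direction is "if": assuming $\omega=0$, I must convert the vanishing of the stabilized projective class into an actual geometric stabilization. The idea is that $[P_i]=0$ in $\widetilde{K}_0$ means $P_i$ is stably free, so after attaching trivial (cancelling) cells one may arrange the top relative chain module to be free with a basis realizing an isomorphism; this lets me adjust the bonding maps by cell trades until some finite-stage bonding map induces isomorphisms on all $\pi_k$ and is therefore a homotopy equivalence, yielding stability.

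I expect the main obstacle to be precisely this last geometric realization step: translating the purely algebraic statement "$P_i$ is stably free" into a controlled finite sequence of elementary cell attachments and collapses that genuinely improves the tower, while keeping everything finite-dimensional and keeping track of basepoints and the $\pi_1$-action on universal covers. The delicate point is that stabilizing by free modules corresponds geometrically to attaching complementary pairs of cells, and one must verify that these algebraic cancellations can be performed simultaneously with—and do not disturb—the higher-connectivity already arranged, so that a single finite stage becomes a homotopy equivalence rather than merely improving connectivity by one dimension at a time.
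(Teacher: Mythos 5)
Your overall architecture matches the paper's: improve the tower by attaching cells (Lemma \ref{ferry}, borrowed from \cite{Fe1}), pass to universal covers, analyze the mapping-cone chain complexes over $\mathbb{Z}[\pi]$, and extract a projective class. But there are two genuine gaps at exactly the points where the real work lies. First, you never explain why the classes $[P_i]$ for different $i$ cohere into a single element of $\widetilde{K}_0$, and the justification you do offer --- that ``composing consecutive bonding maps adds the relevant projective classes'' --- is wrong, and in fact self-defeating: if each bonding map contributed the same nonzero class and composition added them, passing to a subsequence would multiply the obstruction, contradicting well-definedness. What actually happens in the paper is that the Tower Improvement Lemma is applied with enough room to spare ($q=\dim+3$) to arrange that each $g_i$ carries the $k$-skeleton of $L_i$ into the $(k-1)$-skeleton of $L_{i-1}$ for $k\in\{q-2,q-1,q\}$; this makes $C_*(\widetilde{g}_i)$ split as a direct sum in its top degrees, giving $H_q(\widetilde{g}_i)\cong\ker(d_q)$ (an invariant of $C_*(\widetilde{L}_{i-1})$ alone), and hence the natural isomorphism $H_{q+1}(\widetilde{g}_i)\cong H_q(\widetilde{g}_{i+1})$ together with $[H_q(\widetilde{g}_i)]=[H_{q+1}(\widetilde{g}_i)]$. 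That is the mechanism by which every stage determines literally the same class; without some such device your ``stabilized limit of the $[P_i]$'' is not defined.

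Second, you correctly identify the main obstacle in the $\omega=0$ direction --- converting ``$P_i$ is stably free'' into cell attachments that terminate rather than perpetually pushing the problem up one dimension --- but you only name it; you do not resolve it, and naming it is not a proof. The resolution in the paper is that after trivially wedging on $q$-cells to make each $H_q(\widetilde{g}_i)$ genuinely free, one attaches $(q+1)$-cells to $L_{i-1}$ along a \emph{free basis} of $\pi_q(g_i)$; because the attaching classes form a basis, no new $(q+1)$-cycles are created for the pair $\bigl(M(\widetilde{g}_i'),\widetilde{L}_i'\bigr)$, and --- this is the crucial double duty --- the same cells, viewed as $(q+2)$-cells of $M(\widetilde{g}_{i-1}')$, kill $H_{q+1}(\widetilde{g}_{i-1})$ exactly, precisely because of the natural isomorphism $H_{q+1}(\widetilde{g}_{i-1})\cong H_q(\widetilde{g}_i)$ established above. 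So a single round of attachments makes every bonding map a homotopy equivalence, and the process stops. Since this identification is also what your well-definedness argument is missing, the two gaps are really one: the argument needs the skeletal splitting and the resulting index-shifting isomorphism, and your sketch contains neither.
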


Translating Theorem \ref{main} into the language of shape theory yields the
desired solution to Problem B:

\begin{theorem}
\label{shape}A connected compactum\ $Z$\ with finite shape dimension has the
pointed shape of a finite CW complex\ if and only if each of its homotopy
pro-groups is stable and an intrinsically defined Wall obstruction
$\omega\left(  Z\right)  \in\widetilde{K}_{0}\left(
\mathbb{Z}
\mathbb{[}\check{\pi}_{1}\left(  Z\right)  ]\right)  $ vanishes.\emph{\ }
\end{theorem}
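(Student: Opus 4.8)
The plan is to deduce \thmref{shape} directly from \thmref{main} by translating each hypothesis and each conclusion through the dictionary set up in \S2. First I would fix a tower $\{K_{i},f_{i}\}$ of pointed, connected, finite complexes associated to $Z$. Since $Z$ has \emph{finite} shape dimension, and shape dimension is by definition the homotopy dimension of an associated tower, this tower may be taken to be finite-dimensional (after replacing it, if necessary, by a pro-isomorphic tower realizing the infimum in the definition of homotopy dimension). Under the functor described in \S2.4, the homotopy pro-groups of $Z$ are precisely the towers $\operatorname*{pro}$-$\pi_{k}$ of $\{K_{i},f_{i}\}$, so the hypothesis that each homotopy pro-group of $Z$ is stable is exactly the standing hypothesis of \thmref{main}.

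With those identifications in place, \thmref{main} produces an obstruction $\omega(\{K_{i},f_{i}\}) \in \widetilde{K}_{0}(\mathbb{Z}[\check{\pi}_{1}(\{K_{i},f_{i}\})])$. I would then \emph{define} $\omega(Z)$ to be the image of this element under the isomorphism $\widetilde{K}_{0}(\mathbb{Z}[\check{\pi}_{1}(\{K_{i},f_{i}\})]) \to \widetilde{K}_{0}(\mathbb{Z}[\check{\pi}_{1}(Z)])$ induced, via the functoriality of $\widetilde{K}_{0}$, by the canonical isomorphism $j\colon \check{\pi}_{1}(\{K_{i},f_{i}\}) \to \check{\pi}_{1}(Z)$ recorded at the end of the subsection on shapes of compacta. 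The equivalence then reads off at once: $\omega(Z)=0$ if and only if $\omega(\{K_{i},f_{i}\})=0$, which by \thmref{main} holds if and only if $\{K_{i},f_{i}\}$ is stable in $\operatorname*{pro}$-$\mathcal{FH}_{0}$. Finally I would note that, since a finite CW complex is an ANR and hence has shape equal to its homotopy type, stability of the associated tower---that is, pro-isomorphism to a constant tower on a finite complex $L$---is the same as $Z$ having the pointed shape of $L$. Combined with the observation that stability of the homotopy pro-groups is just the hypothesis of Solution A phrased for $\{K_{i},f_{i}\}$, this yields the stated characterization.

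The one point demanding genuine care, and the main obstacle, is the well-definedness of $\omega(Z)$ as an invariant of $Z$ alone. Any two towers associated to $Z$ are cofinal subsystems of the single inverse system $\{N_{\alpha},g_{\alpha}^{\beta};\Omega\}$, hence pro-isomorphic, and their $\check{\pi}_{1}$'s are canonically identified with $\check{\pi}_{1}(Z)$ through the maps $j$. So the required independence of choices will follow once I verify that the obstruction of \thmref{main} is \emph{natural} with respect to pro-isomorphisms: a pro-isomorphism between finite-dimensional towers with stable homotopy pro-groups must carry one obstruction to the other under the induced isomorphism on $\widetilde{K}_{0}(\mathbb{Z}[\check{\pi}_{1}])$. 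This naturality should be implicit in the construction underlying \thmref{main}, but spelling it out---together with confirming that a finite-dimensional representative is available in each relevant pro-isomorphism class, so that \thmref{main} actually applies---is where the real work lies. The remaining ingredients (shape equals homotopy type for finite complexes, and the unwinding of ``stable in $\operatorname*{pro}$-$\mathcal{FH}_{0}$'' into ``pro-isomorphic to a constant tower on a finite complex'') are standard and routine.
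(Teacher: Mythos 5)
Your proposal is correct and follows essentially the same route as the paper: choose a finite-dimensional tower associated to $Z$, apply Theorem~\ref{main} to it, and transport the resulting obstruction through the canonical isomorphism $j$ of $\check{\pi}_{1}$ of the tower onto $\check{\pi}_{1}(Z)$. The well-definedness concern you flag is genuine but is already absorbed into Theorem~\ref{main} itself (Claim 2 of its proof shows that pro-isomorphic towers satisfying the relevant conditions yield the same obstruction), so no additional argument is needed at this stage.
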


Our proof of Theorem \ref{main} begins with two lemmas. The first is a simple
and well known algebraic observation.

\begin{lemma}
\label{fg}Let $C_{\ast}$ be a chain complex of finitely generated free
$\Lambda$-modules, and suppose that $H_{i}\left(  C_{\ast}\right)  =0$ for
$i\leq k$. Then

\begin{enumerate}
\item $\ker\partial_{i}$ is finitely generated and stably free for all $i\leq
k+1$, and

\item $H_{k+1}\left(  C_{\ast}\right)  $ is finitely generated.
\end{enumerate}
\end{lemma}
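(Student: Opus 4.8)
The plan is to argue by descending induction on $i$, peeling off the chain complex one degree at a time and exploiting the acyclicity hypothesis to split the relevant short exact sequences. The key structural fact is the following: if $C_\ast$ is a chain complex of finitely generated free $\Lambda$-modules with $H_i(C_\ast)=0$, then the short exact sequence $0 \to \ker\partial_i \to C_i \to \operatorname{im}\partial_i \to 0$ tells us that $\operatorname{im}\partial_i = \operatorname{im}\partial_i$ and, crucially, that $\ker\partial_i = \operatorname{im}\partial_{i+1}$. So the engine of the proof is to show that $\operatorname{im}\partial_i$ is finitely generated and projective (in fact stably free) as we move down through the degrees.

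First I would establish the base of the induction. For $i$ sufficiently small (below the bottom of the complex, where $C_i = 0$), the kernels are trivially zero and hence finitely generated and free, so the statement holds vacuously. Then I would run the induction upward toward degree $k+1$. The inductive step is where the real work happens: suppose I know that $\operatorname{im}\partial_{i}$ is finitely generated and stably free for the appropriate range. Consider the exact sequence
\[
0 \longrightarrow \ker\partial_i \longrightarrow C_i \xrightarrow{\ \partial_i\ } \operatorname{im}\partial_i \longrightarrow 0.
\]
Since $\operatorname{im}\partial_i$ is stably free, it is projective, so this sequence splits, giving $C_i \cong \ker\partial_i \oplus \operatorname{im}\partial_i$. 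Because $C_i$ is finitely generated free and $\operatorname{im}\partial_i$ is finitely generated projective, $\ker\partial_i$ is a direct summand of a finitely generated free module, hence finitely generated and projective; the splitting together with stable freeness of $\operatorname{im}\partial_i$ upgrades this to stable freeness of $\ker\partial_i$. Now the acyclicity hypothesis $H_i(C_\ast)=0$ for $i \le k$ means $\ker\partial_i = \operatorname{im}\partial_{i+1}$ in that range, which feeds the conclusion about $\ker\partial_i$ directly back into the hypothesis on $\operatorname{im}\partial_{i+1}$ needed for the next step.

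For part (2), I would use the top of the range: at degree $k+1$ we no longer assume $H_{k+1}=0$, so the image $\operatorname{im}\partial_{k+1} = \ker\partial_k$ is a genuine submodule of $\ker\partial_k$. From part (1) we already know $\ker\partial_{k+1}$ is finitely generated, and $C_{k+1}$ is finitely generated free, so $\operatorname{im}\partial_{k+1} \cong C_{k+1}/\ker\partial_{k+1}$ is finitely generated as a quotient of a finitely generated module. Then $H_{k+1}(C_\ast) = \ker\partial_{k+1}/\operatorname{im}\partial_{k+2}$, and since $\ker\partial_{k+1}$ is finitely generated, its quotient $H_{k+1}(C_\ast)$ is finitely generated as well.

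The main obstacle I anticipate is correctly managing the bookkeeping of the induction — specifically making sure the splitting argument is applied in the right degree range and that the transition from ``stably free image'' to ``stably free kernel'' is rigorous. The subtle point is that projectivity (which gives the splitting) is not automatic from finite generation alone; it is exactly the inductively maintained stable-freeness that guarantees each $\operatorname{im}\partial_i$ is projective, and hence that each splitting exists. Everything downstream is routine once this invariant is correctly threaded through the degrees, so the care lies in setting up the induction hypothesis to carry both finite generation and stable freeness simultaneously.
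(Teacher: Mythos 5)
Your proposal is correct and is essentially the paper's own argument: an induction through the degrees in which the vanishing of $H_{i-1}$ turns the tautological sequence $0\to\ker\partial_{i}\to C_{i}\to\operatorname{im}\partial_{i}\to 0$ into $0\to\ker\partial_{i}\to C_{i}\to\ker\partial_{i-1}\to 0$, the inductively maintained stable freeness of the right-hand term splits it, and $H_{k+1}$ is then a quotient of the finitely generated module $\ker\partial_{k+1}$. The only blemish is the garbled sentence in your first paragraph attributing $\ker\partial_{i}=\operatorname{im}\partial_{i+1}$ to the short exact sequence rather than to the hypothesis $H_{i}(C_{\ast})=0$, which you state correctly later.
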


\begin{proof}
For the first assertion, begin by noting that $\ker\partial_{0}=C_{0}$ is
finitely generated and free. Proceeding inductively for $j\leq k+1,$ assume
that $\ker\partial_{j-1}$ is finitely generated and stably free. Since
$H_{j-1}\left(  C_{\ast}\right)  $ is trivial, we have a short exact sequence
\[
0\rightarrow\ker\partial_{j}\rightarrow C_{j}\rightarrow\ker\partial
_{j-1}\rightarrow0.
\]
By our assumption on $\ker\partial_{j-1}$, the sequence splits. Therefore,
$\ker\partial_{j}\oplus\ker\partial_{j-1}\cong C_{j}$, which implies that
$\ker\partial_{j}$ is finitely generated and stably free.

The second assertion follows from the first since $H_{k+1}\left(  C_{\ast
}\right)  =\ker\partial_{k+1}\diagup\operatorname*{im}\partial_{k+2}$.
\end{proof}

The second lemma---which is really the starting point to our proof of Theorem
\ref{main}---was extracted from \cite[Th. 4]{Fe1}. It uses the following
standard notation and terminology. For a map $f:K\rightarrow L$, the mapping
cylinder of $f$ will be denoted $M\left(  f\right)  $. The relative homotopy
and homology groups of the pair $(M\left(  f\right)  ,K)$ will be abbreviated
to $\pi_{i}(f)$ and $H_{i}\left(  f\right)  $. We say that $f$ is
$k$\emph{-connected} if $\pi_{i}(f)=0$ for all $i\leq k$; or equivalently,
$f_{\ast}:\pi_{i}(K)\rightarrow\pi_{i}(L)$ is an isomorphism for $i<k$ and a
surjection when $i=k$. The universal cover of a space $K$ will be denoted
$\widetilde{K}$. If $f:K\rightarrow L$ induces a $\pi_{1}$-isomorphism, then
$\widetilde{f}:\widetilde{K}\rightarrow\widetilde{L}$ denotes a lift of $f$.

\begin{lemma}
[The Tower Improvement Lemma]\label{ferry}Let $\left\{  K_{i},f_{i}\right\}  $
be a tower of pointed, connected, finite complexes with stable
$\operatorname*{pro}$-$\pi_{k}$ for $k\leq n$ and semistable
$\operatorname*{pro}$-$\pi_{n+1}$. Then there is a pro-isomorphic tower
$\left\{  L_{i},g_{i}\right\}  $ of finite complexes with the property that
each $g_{i}$ is $\left(  n+1\right)  $-connected. Moreover, after passing to a
subsequence of $\left\{  K_{i},f_{i}\right\}  $ and relabelling, we may assume that:

\begin{enumerate}
\item each $L_{i}$ is constructed from $K_{i}$ by inductively attaching
finitely many $k$-cells for $2\leq k\leq n+2$,

\item each $g_{i}$ is an extension of $f_{i}$ with $g_{i}\left(  K_{i}%
\cup\left(  \text{new cells of dimension }\leq k\right)  \right)  \subset$
$\left(  K_{i-1}\cup\left(  \text{new cells of dimension }\leq k-1\right)
\right)  $, and

\item the inclusions $K_{i}\hookrightarrow L_{i}$ form the promised
pro-isomorphism from $\left\{  K_{i},f_{i}\right\}  $ to $\left\{  L_{i}%
,g_{i}\right\}  $.
\end{enumerate}
\end{lemma}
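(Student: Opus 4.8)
The plan is to raise the connectivity of the bonding maps one level at a time, inducting on an integer $c$ running from $1$ up to $n+1$ and passing to a subsequence at each stage. Assume inductively that, after relabelling, every bonding map $f_i\colon K_i\to K_{i-1}$ is $c$-connected; I then want to attach finitely many cells of dimensions $c+1$ and $c+2$ to each $K_i$ so that the resulting bonding maps become $(c+1)$-connected. Since the target is $(n+1)$-connectivity, the cells used range over dimensions $2\le k\le n+2$, which is conclusion (1).

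The algebraic engine is \lemref{fg} together with the relative Hurewicz theorem. Because each $K_i$ is finite, the relative chain complex $C_\ast(\widetilde{M(f_i)},\widetilde{K_i})$ consists of finitely generated free $\mathbb{Z}[\pi_1(K_{i-1})]$-modules; since $f_i$ is $c$-connected the lower relative homology vanishes, so relative Hurewicz identifies $\pi_{c+1}(f_i)$ with $H_{c+1}$ of this complex, which \lemref{fg} shows is finitely generated. Thus for each $i$ I may choose finitely many relative classes generating $\pi_{c+1}(f_i)$ as a module, represent each by a map $(D^{c+1},S^c)\to(M(f_i),K_i)$, and attach a $(c+1)$-cell to $K_i$ along the $S^c$-part. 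Composing the $D^{c+1}$-part with the retraction $M(f_i)\to K_{i-1}$ extends $f_i$ over the new cells and kills $\pi_{c+1}$ of the bonding map into $K_{i-1}$.

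The main obstacle is coherence across the tower. The maps just constructed are $(c+1)$-connected only as maps $L_i\to K_{i-1}$; the true bonding map of the new tower is the composite $g_i^{L}\colon L_i\to K_{i-1}\hookrightarrow L_{i-1}$, and the inclusion $K_{i-1}\hookrightarrow L_{i-1}$ can both reintroduce a kernel on $\pi_c$ and spoil surjectivity on $\pi_{c+1}$. The remedy is to resolve, at level $i$, the homotopy classes created by the attachments at level $i-1$, by attaching cells one dimension higher whose $g_i$-images carry out this resolution inside $L_{i-1}$. This propagates attachments up the tower, and it is precisely here that stability of $\operatorname{pro}$-$\pi_c$ (for $c\le n$) and semistability of $\operatorname{pro}$-$\pi_{n+1}$ enter: after passing to a subsequence and relabelling they ensure the propagation closes up with only finitely many cells in each dimension at each level, the dimension-$(c+1)$ attaching data at level $i$ being supplied by dimension-$(c+2)$ data at level $i+1$. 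This dimension shift is exactly what conclusion (2) records, and it accounts for the appearance of cells up to dimension $n+2$.

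Finally, with the attachments arranged coherently, conclusion (3) is essentially formal: taking the inclusions $K_i\hookrightarrow L_i$ as the down-maps and the extended bonding maps $g_i\colon L_i\to K_{i-1}$ as the up-maps produces a ladder in which both triangles commute on the nose, since $g_i$ restricted to $K_i$ is $f_i$ and the $L$-tower bonding map is by construction the inclusion composed with $g_i$. Hence $\{j_i\}$ and $\{g_i\}$ exhibit the desired pro-isomorphism, and the construction delivers the tower $\{L_i,g_i\}$ with $(n+1)$-connected bonding maps. I expect the genuinely delicate point to be the subsequence selection that secures this coherence while keeping every attachment finite; the remaining work is standard cell-trading and diagram-chasing.
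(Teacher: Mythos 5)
Your setup is the right one (induct on connectivity, use Lemma \ref{fg} plus relative Hurewicz to get finite generation of $\pi_{c+1}(f_i)$, attach cells and extend the bonding maps through the mapping cylinder), and you have correctly located the crux: the genuine bonding map of the new tower is $L_i\to K_{i-1}\hookrightarrow L_{i-1}$, and the inclusion into the enlarged target can destroy what the attachments achieved. But at exactly that point the proposal stops being a proof. Your ``remedy'' --- resolve at level $i$ the classes created at level $i-1$ by attaching cells one dimension higher, and trust stability/semistability to make the propagation ``close up'' --- is asserted, not argued; and as described it would feed dimension-$(c+2)$ data at level $i+1$ into dimension-$(c+3)$ data at level $i+2$, and so on, so the cell dimensions would grow without bound along the tower, contradicting conclusion (1). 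Conclusion (2) records a fixed one-dimension shift of skeleta under each bonding map, not an escalating propagation. The sentence ``I expect the genuinely delicate point to be the subsequence selection that secures this coherence'' concedes the entire content of the lemma.

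The missing idea, which the paper supplies, is a \emph{correction of the generators before any cells are attached}, so that no propagation is needed at all. First, stability of $\operatorname*{pro}$-$\pi_c$ lets you drop finitely many terms so the bonding maps are already $\pi_c$-isomorphisms; then $\pi_{c+1}(g_i)$ is just the cokernel of $g_{i\ast}$ on $\pi_{c+1}$, so its (finitely many) generators lift to \emph{absolute} classes $^{i}\alpha_j'\in\pi_{c+1}(L_{i-1})$ and the cells are attached to the \emph{target} along spheres, not to the source along boundaries of relative classes. Second --- and this is the key trick, already visible in the $n=0$ case where one kills $^{i}a_j(^{i}b_j)^{-1}$ rather than anything like $^{i}a_j$ itself --- semistability one level up supplies $^{i}\beta_j$ in the source with $g_{i-1\ast}g_{i\ast}(^{i}\beta_j)=g_{i-1\ast}(^{i}\alpha_j')$, and one replaces $^{i}\alpha_j'$ by $^{i}\alpha_j={}^{i}\alpha_j'-g_{i\ast}(^{i}\beta_j)$. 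This corrected class still generates the same element of the cokernel (so killing it still makes the map $(c+1)$-connected), and it dies under $g_{i-1\ast}$ (so the bonding map extends over the new cell one level down). That single adjustment is simultaneously what the hypotheses are for and what makes the construction terminate in one pass with cells only up to dimension $n+2$; without it, your argument does not go through. (A smaller omission: your uniform induction starting at $c=1$ needs the universal-cover/Hurewicz machinery, which requires $\pi_1$-isomorphisms; the $\pi_1$-surjectivity and $\pi_1$-isomorphism stages must be handled separately by a bare-hands group-theoretic argument, as in Step 1 of the paper's proof.)
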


\begin{proof}
Our proof is by induction on $n$.\medskip

\noindent\emph{Step 1. }($n=0$) Let $\left\{  K_{i},f_{i}\right\}  $ be a
tower with semistable $\operatorname*{pro}$-$\pi_{1}$. By attaching $2$-cells
to the $K_{i}$'s, we wish to obtain a new tower in which all bonding maps
induce surjections on $\pi_{1}$.

By semistability, we may (by passing to a subsequence and relabelling) assume
that each $f_{i\ast}$ maps $f_{i+1\ast}(\pi_{1}(K_{i+1}))$ onto $f_{i\ast}%
(\pi_{1}(K_{i}))$. Let $\left\{  ^{i}a_{j}\right\}  _{j=1}^{N_{i}}$ be a
finite generating set for $\pi_{1}(K_{i})$ and for each $^{i}a_{j}$ choose
$^{i}b_{j}\in f_{i+1\ast}(\pi_{1}(K_{i+1}))$ such that $f_{i\ast}\left(
^{i}a_{j}\right)  =f_{i\ast}\left(  ^{i}b_{j}\right)  $. For each element of
the form $^{i}a_{j}\left(  ^{i}b_{j}\right)  ^{-1}\in\pi_{1}(K_{i})$, attach a
$2$-cell to $K_{i}$ which kills that element. Call the resulting complexes
$L_{i}$'s, and note that each $f_{i}$ extends to a map $k_{i}:L_{i}\rightarrow
K_{i-1}$. Define $g_{i}:L_{i}\rightarrow L_{i-1}$ to be $k_{i}$ composed with
the inclusion $K_{i-1}\hookrightarrow L_{i-1}$. This leads to the following
commutative diagram:
\begin{equation}
\begin{diagram} K_{0}& & \lTo^{f_{1}} & & K_{1} & & \lTo^{f_{2}} & & K_{2} & & \lTo^{f_{3}} & & K_{3} &\cdots\\ & \luTo^{k_{1}} & & \ldInto & & \luTo^{k_{2}} & & \ldInto & & \luTo^{k_{3}} & & \ldInto & \\ & & L_{1} & & \lTo^{g_{2}} & & L_{2} & & \lTo^{g_{3}} & & L_{3} & & \lTo^{g_{4}} & &\cdots & \\ \end{diagram} \label{ladder with inclusions}%
\end{equation}
which ensures that the tower $\left\{  L_{i},g_{i}\right\}  $ is
pro-isomorphic to the original via inclusions.

Note that each $g_{i+1\ast}:\pi_{1}(L_{i+1})\rightarrow\pi_{1}(L_{i})$ is
surjective. Indeed, the loops in $K_{i}$ corresponding to the generating set
$\{^{i}a_{j}\}$ of $\pi_{1}(K_{i})$ still generate $\pi_{1}(L_{i})$; moreover,
in $\pi_{1}(L_{i})$ each $^{i}a_{j}\ $becomes identified with $^{i}b_{j}$
which lies in $\operatorname*{im}\left(  g_{i+1\ast}\right)  $. Properties 1
and 2 are immediate from the construction.\medskip

\noindent\emph{Step 2. }($n>0$) Now suppose $\left\{  K_{i},f_{i}\right\}  $
is a tower such that $\operatorname*{pro}$-$\pi_{k}$ is stable for all $k\leq
n$ and $\operatorname*{pro}$-$\pi_{n+1}$ is semistable.

We may assume inductively that there is a tower $\left\{  L_{i}^{\prime}%
,g_{i}^{\prime}\right\}  $ which has $n$-connected bonding maps and (after
passing to a subsequence of $\left\{  K_{i},f_{i}\right\}  $ and relabelling) satisfies:

\begin{enumerate}
\item[1$^{\prime}$.] each $L_{i}^{\prime}$ is constructed from $K_{i}$ by
inductively attaching finitely many $k$-cells for $2\leq k\leq n+1$, and

\item[2$^{\prime}$.] each $g_{i}^{\prime}$ is an extension of $f_{i}$ such
that $g_{i}^{\prime}\left(  K_{i}\cup\left(  \text{new cells of dimension
}\leq k\right)  \right)  \subset$ $\left(  K_{i-1}\cup\left(  \text{new cells
of dimension }\leq k-1\right)  \right)  $.

\item[3$^{\prime}$.] $\left\{  L_{i}^{\prime},g_{i}^{\prime}\right\}  $ and
$\left\{  K_{i},f_{i}\right\}  $ are pro-isomorphic via inclusions.
\end{enumerate}

\noindent Since $\operatorname*{pro}$-$\pi_{n+1}$ is semistable, we may also
assume that:

\begin{enumerate}
\item[4$^{\prime}$.] $g_{i\ast}^{\prime}$ maps $g_{i+1\ast}^{\prime}(\pi
_{n+1}(L_{i+1}^{\prime}))$ onto $g_{i\ast}^{\prime}(\pi_{n+1}(L_{i}^{\prime
}))$ for all $i$.
\end{enumerate}

Since the $g_{i}^{\prime}$'s are $n$-connected, then each $g_{i\ast}^{\prime
}:\pi_{k}\left(  L_{i}^{\prime}\right)  \rightarrow\pi_{k}\left(
L_{i-1}^{\prime}\right)  $ is an isomorphism for $k<n$. In addition, each
$g_{i\ast}^{\prime}:\pi_{n}\left(  L_{i}^{\prime}\right)  \rightarrow\pi
_{n}\left(  L_{i-1}^{\prime}\right)  $ is surjective; but since
$\operatorname*{pro}$-$\pi_{n}$ is stable, all but finitely many of these
surjections must be isomorphisms. So, by dropping finitely many terms and
relabelling, we assume that these also are isomorphisms.

Our goal is now clear---by attaching $\left(  n+2\right)  $-cells to the
$L_{i}^{\prime}$'s, we wish to make each bonding map $\left(  n+1\right)  $-connected.

Due to the $\pi_{n}$-isomorphisms just established, we have an exact sequence
\begin{equation}
\cdots\rightarrow\pi_{n+1}\left(  L_{i}^{\prime}\right)  \overset{g_{i\ast
}^{\prime}}{\longrightarrow}\pi_{n+1}\left(  L_{i-1}^{\prime}\right)
\rightarrow\pi_{n+1}\left(  g_{i}^{\prime}\right)  \rightarrow0,
\label{sequence 1}%
\end{equation}
for each $i$. Furthermore, since $n\geq1$, each $g_{i}^{\prime}$ induces a
$\pi_{1}$-isomorphism, so we may pass to the universal covers to obtain (by
covering space theory and the Hurewicz theorem) isomorphisms:
\begin{equation}
\pi_{n+1}\left(  g_{i}^{\prime}\right)  \cong\pi_{n+1}\left(  \widetilde
{g}_{i}^{\prime}\right)  \cong H_{n+1}\left(  \widetilde{g}_{i}^{\prime
}\right)  . \label{sequence 2}%
\end{equation}
Each term in the cellular chain complex $C_{\ast}\left(  \widetilde{g}%
_{i}^{\prime}\right)  $ is a finitely generated $%
\mathbb{Z}
\mathbb{[\pi}_{1}\left(  L_{i}\right)  ]$-module; so, by Lemma \ref{fg},
$H_{n+1}\left(  \widetilde{g}_{i}^{\prime}\right)  $ is finitely generated.

Applying ($\ref{sequence 2}$), we may choose a finite generating set $\left\{
^{i}\bar{\alpha}_{j}\right\}  _{j=1}^{N_{i}}$ for each $\pi_{n+1}\left(
g_{i}^{\prime}\right)  $; and by ($\ref{sequence 1}$), each $^{i}\bar{\alpha
}_{j}$ may be represented by an $^{i}\alpha_{j}^{\prime}\in\pi_{n+1}\left(
L_{i-1}^{\prime}\right)  $. By Condition 3$^{\prime}$ we may choose for each
$^{i}\alpha_{j}^{\prime}$, some $^{i}\beta_{j}\in\pi_{n+1}\left(
L_{i}^{\prime}\right)  $ such that $g_{i-1}^{\prime}\circ g_{i}^{\prime
}\left(  ^{i}\beta_{j}\right)  =g_{i-1}^{\prime}\left(  ^{i}\alpha_{j}%
^{\prime}\right)  $. Let $^{i}\alpha_{j}=\,^{i}\alpha_{j}^{\prime}%
-g_{i}^{\prime}\left(  ^{i}\beta_{j}\right)  \in\pi_{n+1}\left(
L_{i-1}^{\prime}\right)  $. Then each $^{i}\alpha_{j}$ is sent to $^{i}%
\bar{\alpha}_{j}$ in $\pi_{n+1}\left(  g_{i}^{\prime}\right)  $ and
$g_{i-1\ast}^{\prime}\left(  ^{i}\alpha_{j}\right)  =0\in\pi_{n+1}\left(
L_{i-2}^{\prime}\right)  $. Attach $\left(  n+2\right)  $-cells to each
$L_{i-1}^{\prime}$ to kill the $^{i}\alpha_{j}$'s. Call the resulting
complexes $L_{i}$'s, and for each $i$ let $k_{i}:L_{i}\rightarrow
L_{i-1}^{\prime}$ be an extension of $g_{i}^{\prime}$. Then let $g_{i}%
:L_{i}\rightarrow L_{i-1}$ be the composition of $k_{i}$ with the inclusion
$L_{i-1}^{\prime}\hookrightarrow L_{i-1}$. This leads to a diagram like that
produced in Step 1, hence the new system $\left\{  L_{i},g_{i}\right\}  $ is
pro-isomorphic to $\left\{  L_{i}^{\prime},g_{i}^{\prime}\right\}  $, and thus
to $\left\{  K_{i},f_{i}\right\}  $ via inclusions. Moreover, it is easy to
check that each $g_{i}$ is $\left(  n+1\right)  $-connected. Properties 1 and
2 are immediate from the construction and the inductive hypothesis, and
Property 3 from the final ladder diagram.
\end{proof}

Suppose now that $\left\{  K_{i},f_{i}\right\}  $ has stable
$\operatorname*{pro}$-$\pi_{k}$ for all $k$. Then, by repeatedly attaching
cells to the $K_{i}$'s, one may obtain pro-isomorphic towers with
$r$-connected bonding maps for arbitrarily large $r$. If $\left\{  K_{i}%
,f_{i}\right\}  $ is finite-dimensional it seems reasonable that, once $r$
exceeds the dimension of $\left\{  K_{i},f_{i}\right\}  $, this procedure will
terminate with bonding maps that are connected in all dimensions---and thus,
homotopy equivalences. Unfortunately, this strategy is too simplistic---in
order to obtain $r$-connected maps we must attach $\left(  r+1\right)
$-cells; thus, the dimensions of the complexes continually exceeds the
connectivity of the bonding maps. Roughly speaking, Theorem \ref{main}
captures the obstruction to making this strategy work.\bigskip

\begin{proof}
[Proof of Theorem \ref{main}]Begin with a tower $\left\{  L_{i},g_{i}\right\}
$ of $q$-dimensional complexes pro-isomorphic to $\left\{  K_{i}%
,f_{i}\right\}  $, via a diagram of type (\ref{ladder with inclusions}), which
has the following properties for all $i$.\medskip

\begin{itemize}
\item[a)] $g_{i}$ is $\left(  q-1\right)  $-connected,

\item[b)] for $k\in\left\{  q-2,q-1,q\right\}  $, $g_{i}$ maps the
$k$-skeleton of $L_{i}$ into the $\left(  k-1\right)  $-skeleton of $L_{i-1}$,

\item[c)] $g_{i\ast}$ maps $g_{i+1\ast}(\pi_{q}(L_{i+1}))$ onto $g_{i\ast}%
(\pi_{q}(L_{i}))$, and

\item[d)] $g_{i\ast}:\pi_{q-1}\left(  L_{i}\right)  \rightarrow\pi
_{q-1}\left(  L_{i-1}\right)  $ is an isomorphism.\medskip
\end{itemize}

\noindent A tower satisfying Conditions a) and b) is easily obtainable; apply
Lemma \ref{ferry} to $\left\{  K_{i},f_{i}\right\}  $ with $n=\dim\left\{
K_{i},f_{i}\right\}  +1$, in which case $q=\dim\left\{  K_{i},f_{i}\right\}
+3$. (\textbf{Note.} Although it may seem excessive to allow $\dim\left\{
L_{i},g_{i}\right\}  $ exceed $\dim\left\{  K_{i},f_{i}\right\}  $ by $3$,
this is done to obtain Condition b), which is key to our argument.)
Semistability of $\operatorname*{pro}$-$\pi_{q}$ gives Condition c)---after
passing to a subsequence and relabeling. Then, since $\operatorname*{pro}%
$-$\pi_{q-1}$ is stable and each $g_{i\ast}:\pi_{q-1}\left(  L_{i}\right)
\rightarrow\pi_{q-1}\left(  L_{i-1}\right)  $ is surjective, we may drop
finitely many terms to obtain Condition d).

As in the proof of Lemma \ref{ferry}, $\pi_{q}\left(  g_{i}\right)  $ and
$H_{q}\left(  \widetilde{g}_{i}\right)  $ are isomorphic finitely generated $%
\mathbb{Z}
\left[  \pi_{1}L_{i}\right]  $-modules. We will show that, for all $i$,
$H_{q}\left(  \widetilde{g}_{i}\right)  $ is projective and that all of these
modules are stably equivalent. (This is a pleasant surprise, since the $L_{i}%
$'s and $g_{i}$'s may all be different.) Thus we obtain corresponding elements
$\left[  H_{q}\left(  \widetilde{g}_{i}\right)  \right]  $ of $\widetilde
{K}_{0}\left(
\mathbb{Z}
\mathbb{[}\pi_{1}\left(  L_{i}\right)  ]\right)  $. When these elements are
trivial, i.e., when the modules are stably free, we will show that, by
attaching finitely many $\left(  q+1\right)  $-cells to each $L_{i}$, bonding
maps can be made homotopy equivalences. To complete the proof we define a
single obstruction $\omega\left(  \left\{  K_{i},f_{i}\right\}  \right)  $ to
be the image of $\left(  -1\right)  ^{q+1}[H_{q}\left(  \widetilde{g}%
_{i}\right)  ]$ in $\widetilde{K}_{0}\left(
\mathbb{Z}
\mathbb{[}\check{\pi}_{1}\left(  \left\{  K_{i},f_{i}\right\}  \right)
]\right)  $ and show that this element is uniquely determined by $\left\{
K_{i},f_{i}\right\}  $.\medskip

\noindent\textbf{Notes.} \textbf{1)} To be more precise, the $H_{q}\left(
\widetilde{g}_{i}\right)  $ determine elements $\left(  -1\right)
^{q+1}[H_{q}\left(  \widetilde{g}_{i}\right)  ]$ of $\widetilde{K}_{0}\left(
\mathbb{Z}
\mathbb{[}\pi_{1}\left(  L_{i}\right)  ]\right)  $ which may be associated,
via inclusion maps (that induce $\pi_{1}$-isomorphisms), to elements of
$\widetilde{K}_{0}\left(
\mathbb{Z}
\mathbb{[}\pi_{1}\left(  K_{i}\right)  ]\right)  $, which in turn determine a
common element of $\widetilde{K}_{0}\left(
\mathbb{Z}
\mathbb{[}\check{\pi}_{1}\left(  \left\{  K_{i},f_{i}\right\}  \right)
]\right)  $ via the projection maps---which in our setting are all isomorphisms.

\noindent\textbf{2) }We have used a factor $\left(  -1\right)  ^{q+1}$
(instead of the more concise $\left(  -1\right)  ^{q}$) so that our definition
agrees with those already in the literature.\medskip

While most of our work takes place in the individual mapping cylinders
$M\left(  g_{i}\right)  $ and their universal covers, there is some interplay
between adjacent cylinders. For that reason, it is useful to view our work as
taking place in the \textquotedblleft infinite mapping
telescope\textquotedblright\ shown in Figure 1 (and in its universal cover).
\begin{figure}[ptb]
\begin{center}
\includegraphics[
height=1.00in,
width=4.00in
]{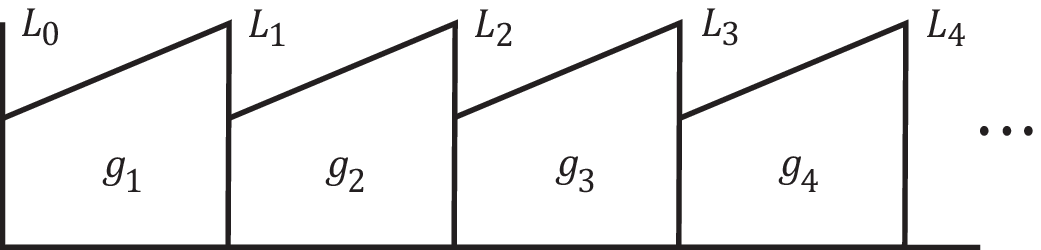}
\end{center}
\caption{\ }%
\label{Fig 1}%
\end{figure}

For ease of notation, fix $i$ and consider the pair $\left(  M\left(
\widetilde{g}_{i}\right)  ,\widetilde{L}_{i}\right)  $. It is a standard fact
(see \cite[3.9]{Co}) that $C_{\ast}\left(  \widetilde{g}_{i}\right)  $ is
isomorphic to the algebraic mapping cone\ of the chain homomorphism $g_{i\ast
}:C_{\ast}\left(  \widetilde{L}_{i}\right)  \rightarrow C_{\ast}\left(
\widetilde{L}_{i-1}\right)  $. In particular, if the cellular chain complexes
$C_{\ast}\left(  \widetilde{L}_{i-1}\right)  $ and $C_{\ast}\left(
\widetilde{L}_{i}\right)  $ of $\widetilde{L}_{i-1}$ and $\widetilde{L}_{i}$
are expressed as:
\begin{align}
0 &  \rightarrow D_{q}\overset{d_{q}}{\longrightarrow}D_{q-1}\overset{d_{q-1}%
}{\longrightarrow}\cdots\overset{d_{2}}{\longrightarrow}D_{1}\overset{d_{1}%
}{\longrightarrow}D_{0}\longrightarrow0\text{,\quad and\smallskip
}\label{sequence 3}\\
0 &  \rightarrow D_{q}^{\prime}\overset{d_{q}^{\prime}}{\longrightarrow
}D_{q-1}^{\prime}\overset{d_{q-1}^{\prime}}{\longrightarrow}\cdots
\overset{d_{2}^{\prime}}{\longrightarrow}D_{1}^{\prime}\overset{d_{1}^{\prime
}}{\longrightarrow}D_{0}^{\prime}\longrightarrow0,\label{sequence 4}%
\end{align}
respectively, then $C_{\ast}\left(  \widetilde{g}_{i}\right)  $ is naturally
isomorphic to a chain complex
\[
0\rightarrow C_{q+1}\overset{\partial_{q+1}}{\longrightarrow}C_{q}%
\overset{\partial_{q}}{\longrightarrow}\cdots\overset{\partial_{2}%
}{\longrightarrow}C_{1}\overset{\partial_{1}}{\longrightarrow}C_{0}%
\longrightarrow0
\]
where, for each $j$,
\[
C_{j}=D_{j-1}^{\prime}\oplus D_{j}\text{ \ and \ }\partial_{j}\left(
x,y\right)  =(-\,d_{j-1}^{\prime}x,\ \widetilde{g}_{i\ast}x+d_{j}y).
\]
Here one views each $\pi_{1}\left(  L_{i-1}\right)  $-module $D_{j}$ as a
$\pi_{1}\left(  L_{i}\right)  $-module in the obvious way---associating
$a\cdot x$ with $\widetilde{g}_{i\ast}\left(  a\right)  \cdot x$ for $a\in
\pi_{1}\left(  L_{i}\right)  $.

By Condition b), the map $\widetilde{g}_{i\ast}:D_{j}^{\prime}\rightarrow
D_{j}$ is trivial for $j\geq q-2$; so, in these dimensions, $\partial_{j}$
splits as $-\,d_{j-1}^{\prime}\oplus d_{j}$, allowing our chain complex to be
written:
\[
0\rightarrow\underset{C_{q+1}}{\underbrace{D_{q}^{\prime}\oplus0}}%
\overset{-\,d_{q}^{\prime}\oplus0}{\longrightarrow}\underset{C_{q}%
}{\underbrace{D_{q-1}^{\prime}\oplus D_{q}}}\overset{-\,d_{q-1}^{\prime}\oplus
d_{q}}{\longrightarrow}\underset{C_{q-1}}{\underbrace{D_{q-2}^{\prime}\oplus
D_{q-1}}}\overset{-\,d_{q-2}^{\prime}\oplus d_{q-1}}{\longrightarrow}%
\underset{C_{q-2}}{\underbrace{D_{q-3}^{\prime}\oplus D_{q-2}}}\overset
{\partial_{q-2}}{\longrightarrow}\cdots.
\]
Since the \textquotedblleft minus signs\textquotedblright\ have no effect on
kernels or images of maps, it follows that
\begin{align}
\ker\partial_{q-1}  &  =\ker(d_{q-2}^{\prime})\oplus\ker(d_{q-1})\label{1}\\
\ker\partial_{q}  &  =\ker(d_{q-1}^{\prime})\oplus\ker(d_{q})\label{2}\\
\ker\partial_{q+1}  &  =\ker(d_{q}^{\prime})\label{3}\\
H_{q-1}(\widetilde{g}_{i})  &  =\left(  \ker(d_{q-2}^{\prime})\diagup
\operatorname*{im}\left(  \,d_{q-1}^{\prime}\right)  \right)  \oplus\left(
\ker(d_{q-1})\diagup\operatorname*{im}\left(  \,d_{q}\right)  \right)
\label{4}\\
H_{q}(\widetilde{g}_{i})  &  =\left(  \ker(d_{q-1}^{\prime})\diagup
\operatorname*{im}\left(  d_{q}^{\prime}\right)  \right)  \oplus\ker
(d_{q})\label{5}\\
H_{q+1}(\widetilde{g}_{i})  &  =\ker(d_{q}^{\prime}) \label{6}%
\end{align}
Since $H_{q-1}(\widetilde{g}_{i})=0$, each summand in Identity \ref{4} is
trivial. Furthermore, the same reasoning applied to the adjacent mapping
cylinder $M\left(  g_{i+1}\right)  $ yields an analogous set of identities for
$C_{\ast}\left(  \widetilde{g}_{i+1}\right)  $ in which the \textquotedblleft
primed terms\textquotedblright\ become the \textquotedblleft unprimed
terms\textquotedblright. This shows that $\ker(d_{q-1}^{\prime})\diagup
\operatorname*{im}\left(  \,d_{q}^{\prime}\right)  $ is also trivial. Hence,
the first summand in Identity \ref{5} is trivial, so $H_{q}(\widetilde{g}%
_{i})\cong\ker(d_{q})$. Identity \ref{2} together with Lemma \ref{fg} then
shows that $H_{q}(\widetilde{g}_{i})$ is finitely generated and projective.
The same reasoning in $C_{\ast}\left(  \widetilde{g}_{i+1}\right)  $ shows
that $H_{q}(\widetilde{g}_{i+1})\cong\ker(d_{q}^{\prime})$ is finitely
generated projective, so by Identity \ref{6}, $H_{q+1}(\widetilde{g}_{i})$ is
finitely generated projective and naturally isomorphic to $H_{q}(\widetilde
{g}_{i+1})$ (using $\widetilde{g}_{i+1\ast}$ to make $H_{q+1}(\widetilde
{g}_{i})$ a $\pi_{1}\left(  L_{i+1}\right)  $-module).

Next we show that $H_{q}(\widetilde{g}_{i})$ and $H_{q+1}(\widetilde{g}_{i})$
are stably equivalent. Extract the short exact sequence
\[
0\rightarrow H_{q+1}(\widetilde{g}_{i})\rightarrow D_{q}^{\prime}%
\rightarrow\operatorname*{im}\left(  d_{q}^{\prime}\right)  \rightarrow0
\]
from above, then recall that $\operatorname*{im}\left(  d_{q}^{\prime}\right)
$ is equal to $\ker(d_{q-1}^{\prime})$. The latter is projective, so
\[
D_{q}^{\prime}\cong H_{q+1}(\widetilde{g}_{i})\oplus\ker(d_{q-1}^{\prime}).
\]
Thus $\left[  H_{q+1}(\widetilde{g}_{i})\right]  =-\left[  \ker(d_{q-1}%
^{\prime})\right]  $ in $\widetilde{K}_{0}\left(  \pi_{1}(L_{i}\right)  )$.
Since $\left[  H_{q}(\widetilde{g}_{i})\right]  =\left[  \ker(d_{q})\right]  $
and (by Identity \ref{2}), $\left[  \ker(d_{q})\right]  =-\left[  \ker
(d_{q-1}^{\prime})\right]  $, $\left[  H_{q}(\widetilde{g}_{i})\right]
=\left[  H_{q+1}(\widetilde{g}_{i})\right]  $.

To summarize, we have shown that for each $i$:\medskip

\begin{itemize}
\item $H_{q}(\widetilde{g}_{i})$ and $H_{q+1}(\widetilde{g}_{i})$ are finitely
generated and projective,

\item $\left[  H_{q}(\widetilde{g}_{i})\right]  =\left[  H_{q+1}(\widetilde
{g}_{i})\right]  $ in $\widetilde{K}_{0}\left(  \pi_{1}\left(  L_{i}\right)
\right)  $, and

\item $H_{q+1}(\widetilde{g}_{i})$ naturally isomorphic to $H_{q}%
(\widetilde{g}_{i+1})$ as $\pi_{1}\left(  L_{i+1}\right)  $-modules.\medskip
\end{itemize}

\noindent These observations combine to show that each $\left[  H_{q}%
(\widetilde{g}_{i})\right]  $ determines the \textquotedblleft
same\textquotedblright\ element of $\widetilde{K}_{0}\left(  \check{\pi}%
_{1}\left(  \left\{  K_{i},f_{i}\right\}  \right)  \right)  $. More precisely,
define $\omega\left(  \left\{  K_{i},f_{i}\right\}  \right)  $ to be the image
of $\left(  -1\right)  ^{q+1}[H_{q}\left(  \widetilde{g}_{i}\right)  ]$ under
the isomorphism $\widetilde{K}_{0}\left(
\mathbb{Z}
\mathbb{[}\pi_{1}\left(  L_{i}\right)  ]\right)  \rightarrow\widetilde{K}%
_{0}\left(  \check{\pi}_{1}\left(  \left\{  K_{i},f_{i}\right\}  \right)
\right)  $ induced by the composition of group isomorphisms%
\begin{equation}
\pi_{1}\left(  L_{i}\right)  \overset{p_{i}^{-1}}{\longrightarrow
}\underleftarrow{\lim}\left\{  \pi_{i}(L_{i}),(g_{i})_{\ast}\right\}
\rightarrow\underleftarrow{\lim}\left\{  \pi_{i}(K_{i}),(f_{i})_{\ast
}\right\}  =\check{\pi}_{1}\left(  \left\{  K_{i},f_{i}\right\}  \right)
\label{composition of isomorphisms}%
\end{equation}
where $p_{i}:\underleftarrow{\lim}\left\{  \pi_{i}(L_{i}),(g_{i})_{\ast
}\right\}  \rightarrow\pi_{1}\left(  L_{i}\right)  $ is projection, and the
isomorphism between inverse limits is canonically induced by ladder diagram
(\ref{ladder with inclusions}).\medskip

\noindent\emph{Claim 1.} If $\omega\left(  \left\{  K_{i},f_{i}\right\}
\right)  =0$, then $\left\{  K_{i},f_{i}\right\}  $ is stable.\smallskip

We will show that, by adding finitely many$q$- and $\left(  q+1\right)
$-cells to each of the above $L_{i}$'s, we may arrive at a pro-isomorphic
tower in which all bonding maps are homotopy equivalences.

By assumption, each $%
\mathbb{Z}
\mathbb{\pi}_{1}$-module $H_{q}(\widetilde{g}_{i})$ becomes free upon
summation with a finitely generated free module. This may be accomplished
geometrically by attaching finitely many $q$-cells to the corresponding
$L_{i-1}$'s via trivial attaching maps at the basepoints. Each $g_{i-1}$ may
then be extended by mapping these $q$-cells to the basepoint of $L_{i-2}$.
Since this procedure preserves all relevant properties of our tower, we will
assume that, for each $i$, $H_{q}(\widetilde{g}_{i})$ (and therefore $\pi
_{q}\left(  g_{i}\right)  $) is a finitely generated free $%
\mathbb{Z}
\mathbb{[\pi}_{1}\left(  L_{i}\right)  ]$-module.

Proceed as in Step 2 of the proof of Lemma \ref{ferry} to obtain collections
$\left\{  ^{i}\alpha_{j}\right\}  _{j=1}^{N_{i}}\subset\pi_{n+1}\left(
L_{i-1}\right)  $ that correspond to generating sets for the $\pi_{q}\left(
g_{i}\right)  $'s and which satisfy $g_{i-1\ast}\left(  ^{i}\alpha_{j}\right)
=0\in\pi_{q}\left(  L_{i-2}\right)  $ for all $i,j$. In addition, we now
require that $\left\{  ^{i}\alpha_{j}\right\}  _{j=1}^{N_{i}}$ corresponds to
a free basis for $\pi_{q}\left(  g_{i}\right)  $. For each $^{i}\alpha_{j}$
attach a single $\left(  q+1\right)  $-cell to $L_{i-1}$ to kill that element.
Extend each $g_{i}$ to $g_{i}^{\prime}:L_{i}^{\prime}\rightarrow
L_{i-1}^{\prime}$ as before, thereby obtaining a tower $\left\{  L_{i}%
^{\prime},g_{i}^{\prime}\right\}  $ for which all bonding maps are
$q$-connected. Since the $\left(  q+1\right)  $-cells are attached to
$L_{i-1}$ along a free basis, we do not create any new $\left(  q+1\right)
$-cycles for the pair $\left(  M\left(  \widetilde{g}_{i}^{\prime}\right)
,\widetilde{L}_{i}^{\prime}\right)  $, so no new $\left(  q+1\right)
$-dimensional homology is introduced. Moreover, the $\left(  q+1\right)
$-cells attached to $L_{i-1}$ result in $\left(  q+2\right)  $-cells in
$M(\widetilde{g}_{i-1}^{\prime})$ which are attached in precisely the correct
manner to kill $H_{q+1}\left(  \widetilde{g}_{i-1}\right)  $ without creating
any $\left(  q+2\right)  $-dimensional homology---this is due to the natural
isomorphism discovered earlier between $H_{q+1}\left(  \widetilde{g}%
_{i-1}\right)  $ and $H_{q}\left(  \widetilde{g}_{i}\right)  $. Thus the
$g_{i}^{\prime}$'s are all $\left(  n+2\right)  $-connected, and since the
$L_{i}^{\prime}$'s are $\left(  n+1\right)  $-dimensional, this means that the
$g_{i}^{\prime}$'s are homotopy equivalences. So $\left\{  L_{i}^{\prime
},g_{i}^{\prime}\right\}  $ and hence $\left\{  K_{i},f_{i}\right\}  $, are
stable in $\operatorname*{pro}$-$\mathcal{FH}_{0}$.\medskip

\noindent\emph{Claim 2.} The obstruction is well defined.\smallskip

We must show that $\omega(\left\{  K_{i},f_{i}\right\}  )$ does not depend on
the tower $\left\{  L_{i},g_{i}\right\}  $ and ladder diagram chosen at the
beginning of the proof. First observe that any subsequence $\left\{  L_{k_{i}%
},g_{k_{i}k_{i-1}}\right\}  $ of $\left\{  L_{i},g_{i}\right\}  $ yields the
same obstruction. This is immediate in the special case that $\left\{
L_{k_{i}},g_{k_{i}k_{i-1}}\right\}  $ contains two consecutive terms of
$\left\{  L_{i},g_{i}\right\}  $. If not, notice that $\left\{  L_{k_{i}%
},g_{k_{i}k_{i-1}}\right\}  $ is a subsequence of $L_{k_{1}}\leftarrow
L_{k_{1}+1}\leftarrow L_{k_{2}}\leftarrow L_{k_{3}}\leftarrow\cdots,$ which is
a subsequence of $\left\{  L_{i},g_{i}\right\}  $. Therefore the more general
observation follows from the special case.

Next suppose that $\left\{  L_{i},g_{i}\right\}  $ and $\left\{  M_{i}%
,h_{i}\right\}  $ are each towers of finite $q$-dimensional complexes
satisfying the conditions laid out at the beginning of the proof. Then
$\left\{  L_{i},g_{i}\right\}  $ and $\left\{  M_{i},h_{i}\right\}  $ are
pro-isomorphic; so, after passing to subsequences and relabeling, there exists
a homotopy commuting diagram of the form:
\[
\begin{diagram}
L_{0}& & \lTo^{g_{1}} & & L_{1} & & \lTo^{g_{2}} & &
L_{2} & & \lTo^{g_{3}} & & L_{3} &\cdots\\
& \luTo^{{\lambda}_{1}} & & \ldTo^{{\mu}_{1}} & & \luTo^{{\lambda}_{2}} & &  \ldTo^{{\mu}_{2}}   & & \luTo^{{\lambda}_{3}} & & \ldTo^{{\mu}_{3}} & \\
& & M_{1} & & \lTo^{h_{2}} & & M_{2} & & \lTo^{h_{3}}  & & M_{3} & & \lTo^{h_{4}} & & \cdots & \\
\end{diagram}
\]
where all $\lambda_{i}$ and $\mu_{i}$ are cellular maps. From here we may
create a new tower:
\[
M_{1}\longleftarrow L_{2}\longleftarrow M_{4}\longleftarrow L_{5}%
\longleftarrow M_{7}\longleftarrow L_{8}\longleftarrow M_{10}\longleftarrow
\cdots
\]
where the bonding maps are determined (up to homotopy) by the ladder diagram.
Properties a),c) and d) hold for this tower due to the corresponding
properties for $\left\{  L_{i},g_{i}\right\}  $ and $\left\{  M_{i}%
,h_{i}\right\}  $. To see that Property b) holds, note that each bonding map
is the composition of a $g_{i}$ or an $h_{i}$ with a cellular map. (This is
why so many terms were omitted.) Since this new tower contains subsequences
which are---up to homotopies of the bonding maps---subsequences of $\left\{
L_{i},g_{i}\right\}  $ and $\left\{  M_{i},h_{i}\right\}  $, our initial
observation implies that all determine the same obstruction.

Finally we consider the general situation where $\left\{  L_{i},g_{i}\right\}
$ and $\left\{  M_{i},h_{i}\right\}  $ satisfy Conditions a)-d), but are not
necessarily of the same dimension. By the previous case and induction, it will
be enough to show that, for a given $q$-dimensional $\left\{  L_{i}%
,g_{i}\right\}  $, we can find a $\left(  q+1\right)  $-dimensional tower
$\left\{  L_{i}^{\prime},g_{i}^{\prime}\right\}  $ which satisfies the
corresponding versions of Conditions a)-d), and which determines the same
obstruction as $\left\{  L_{i},g_{i}\right\}  $. In this step, the need for
the $\left(  -1\right)  ^{q+1}$ factor finally becomes clear.

The tower $\left\{  L_{i}^{\prime},g_{i}^{\prime}\right\}  $ is obtained by
carrying out our usual strategy of attaching a finite collection of $\left(
q+1\right)  $-cells to each $L_{i-1}$ along a generating set for $H_{q}\left(
M\left(  \widetilde{g}_{i}\right)  ,\widetilde{L}_{i}\right)  $. The resulting
$C_{\ast}\left(  \widetilde{L}_{i}^{\prime}\right)  $'s differ from the
$C_{\ast}\left(  \widetilde{L}_{i}\right)  $'s only in dimension $q+1$ where
we have introduced finitely generated free modules $^{i}F_{q+1}$. By inserting
this term into (\ref{sequence 3}) and rewriting $D_{q}$ as $\operatorname*{im}%
\left(  d_{q}\right)  \oplus\ker\left(  d_{q}\right)  ,$ the chain complex for
$L_{i-1}^{\prime}$ may be written:\smallskip\
\[
0\longrightarrow\,^{i}F_{q+1}\overset{d_{q+1}}{\longrightarrow}%
\operatorname*{im}\left(  d_{q}\right)  \oplus\ker\left(  d_{q}\right)
\overset{d_{q}}{\longrightarrow}D_{q-1}\overset{d_{q-1}}{\longrightarrow
\smallskip}\cdots\overset{d_{2}}{\longrightarrow}D_{1}\overset{d_{1}%
}{\longrightarrow}D_{0}\longrightarrow0,
\]
By construction, $d_{q+1}$ takes $\,^{i}F_{q+1}$ onto $\ker\left(
d_{q}\right)  $ thereby eliminating the $q$-dimensional homology of the pair
$\left(  M\left(  \widetilde{g}_{i}^{\prime}\right)  ,\widetilde{L}%
_{i}^{\prime}\right)  $. Note however, that we may have introduced new
$(q+1)$-dimensional homology. Indeed, by our earlier analysis, $H_{q+1}%
(\widetilde{g}_{i}^{\prime})=\ker(\,d_{q+1})$. (The original $\left(
q+1\right)  $-dimensional homology of the pair was eliminated---as it was in
the unobstructed case---when we attached $\left(  q+1\right)  $-cells to
$L_{i}$.) \ By extracting the short exact sequence
\[
0\longrightarrow\ker\left(  d_{q+1}\right)  \,\longrightarrow\,^{i}%
F_{q+1}\longrightarrow\ker\left(  d_{q}\right)  \longrightarrow0
\]
and recalling that $\ker\left(  d_{q}\right)  \cong H_{q}\left(  \widetilde
{g}_{i}\right)  $ is projective, we have$\,$%
\[
^{i}F_{q+1}\cong H_{q+1}(\widetilde{g}_{i}^{\prime})\oplus H_{q}\left(
\widetilde{g}_{i}\right)  .
\]
So, upon projection into $\widetilde{K}_{0}\left(  \check{\pi}_{1}\left(
\left\{  K_{i},f_{i}\right\}  \right)  \right)  $, (as described in line
(\ref{composition of isomorphisms})), $\left[  H_{q+1}(\widetilde{g}%
_{i}^{\prime})\right]  $ and $-\left[  H_{q}\left(  \widetilde{g}_{i}\right)
\right]  $ determine the same element. The same is then true for $\left(
-1\right)  ^{q+2}\left[  H_{q+1}(\widetilde{g}_{i}^{\prime})\right]  $ and
$\left(  -1\right)  ^{q+1}\left[  H_{q}\left(  \widetilde{g}_{i}\right)
\right]  $, showing that $\left\{  L_{i},g_{i}\right\}  $ and $\left\{
L_{i}^{\prime},g_{i}^{\prime}\right\}  $ lead to the same obstruction.
\end{proof}

\begin{proof}
[Proof of Theorem \ref{shape}]We need only verify the forward implication, as
the converse is obvious.

Using the finite-dimensionality of $Z$, choose a finite-dimensional tower of
pointed, connected, finite complexes $\left\{  N_{i},g_{i}\right\}  $
associated to $Z$. By the pro-$\pi_{k}$ hypotheses on $Z$, we may apply
Theorem \ref{main} to obtain $\omega\left(  \left\{  N_{i},g_{i}\right\}
\right)  \in\widetilde{K}_{0}\left(
\mathbb{Z}
\mathbb{[}\check{\pi}_{1}\left(  \left\{  N_{i},g_{i}\right\}  )]\right)
\right)  $. The inclusion of $\left\{  N_{i},g_{i}\right\}  $ into the
associated inverse system $\left\{  N_{\alpha},g_{\alpha}^{\beta}%
;\Omega\right\}  $, as described in \S \ref{Subsection: shapes of compacta},
yields a canonical isomorphism of $\check{\pi}_{1}\left(  \left\{  K_{i}%
,g_{i}\right\}  )\right)  $ onto $\check{\pi}_{1}\left(  \left\{  N_{\alpha
},g_{\alpha}^{\beta};\Omega\right\}  \right)  =\check{\pi}_{1}\left(
Z\right)  $ which converts $\omega\left(  \left\{  N_{i},g_{i}\right\}
\right)  $ to our intrinsically defined Wall obstruction $\omega\left(
Z\right)  \in\widetilde{K}_{0}\left(
\mathbb{Z}
\mathbb{[}\check{\pi}_{1}\left(  Z\right)  ]\right)  $.
\end{proof}

\section{Realizing the obstructions}

In addition to proving Theorems \ref{main} and \ref{shape}, Edwards and
Geoghegan showed how to build towers and compacta with non-trivial
obstructions. By applying their strategy within our framework, we obtain an
easy proof of the following:

\begin{proposition}
Let $G$ be a finitely presentable group and $P$ a finitely generated
projective $%
\mathbb{Z}
\left[  G\right]  $ module. Then there exists a tower of finite $2$-complexes
$\left\{  K_{i},f_{i}\right\}  $, with stable $\operatorname*{pro}$-$\pi_{k}$
for all $k$ and $\check{\pi}_{1}\left(  \left\{  K_{i},f_{i}\right\}  \right)
\cong G$, such that $\omega\left(  \left\{  K_{i},f_{i}\right\}  \right)
=\left[  P\right]  \in\widetilde{K}_{0}\left(
\mathbb{Z}
\left[  G\right]  \right)  $.
\end{proposition}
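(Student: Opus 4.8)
**The plan is to realize the projective module $P$ as the obstruction $H_q(\widetilde{g}_i)$ by building a tower in which every bonding map looks locally like the "wedge of spheres" carrying $P$.** Since the obstruction $\omega(\{K_i,f_i\})$ is defined (up to sign) as the class $[H_q(\widetilde g_i)]$ in $\widetilde K_0(\mathbb{Z}[\check\pi_1])$, and since the proof of Theorem~\ref{main} shows this class is computed in dimension $q$ for any tower satisfying Conditions a)--d), it suffices to engineer a tower whose bonding maps all have relative homology $H_q(\widetilde g_i)\cong P$ (with the appropriate sign bookkeeping so that $(-1)^{q+1}[P]$ lands on $[P]$). The natural starting point is a finite $2$-complex $K$ with $\pi_1(K)\cong G$, which exists because $G$ is finitely presentable; this will serve as the common "spine" of every stage of the tower.

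\emph{First} I would choose a finitely generated free $\mathbb{Z}[G]$-module $F$ and a complementary projective $Q$ with $P\oplus Q\cong F$, so that $P$ is a direct summand of a free module—this is exactly the data needed to realize $P$ as the kernel of a boundary map between free modules, the form in which $H_q(\widetilde g_i)$ arose in the proof of Theorem~\ref{main}. \emph{Second}, following the Edwards--Geoghegan strategy, I would let every complex $K_i$ be the same space $K$ (or a fixed finite $2$-complex built from $K$ by wedging on enough $2$-spheres to accommodate the free module $F$), and define each bonding map $f_i\colon K_i\to K_{i-1}$ to be a single fixed map $f$ that induces the identity on $\pi_1$ but whose lift $\widetilde f$ to universal covers realizes the idempotent $\mathbb{Z}[G]$-endomorphism $F\to F$ with image $P$ on the relevant chain group. \emph{Third}, I would verify that this constant tower has stable $\operatorname{pro}$-$\pi_k$ for all $k$: the $\pi_1$-pro-group is constant hence stable, and because each $f$ is built to be an isomorphism on the free part and a projection onto $P$, the higher homotopy pro-groups stabilize after passing to the Mittag--Leffler image—this is where I must check that the telescope has the homotopy type forcing stability in every degree.

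\emph{The main obstacle} will be producing an actual \textbf{map} $f\colon K\to K$ of finite $2$-complexes whose induced chain map on $C_*(\widetilde K)$ realizes the chosen idempotent projection onto $P$, rather than merely producing the algebra. The difficulty is that chain-level idempotents over $\mathbb{Z}[G]$ need not be geometrically realizable by a self-map of a fixed $2$-complex; the Edwards--Geoghegan device circumvents this by allowing the complexes to grow—attaching cells at each stage so that the \emph{composite} bonding maps, not any single self-map, realize the splitting. So I expect the real work to be an inductive construction paralleling Lemma~\ref{ferry}: at stage $i$ I attach $q$- and $(q+1)$-cells to $K_{i-1}$ so that the relative chain complex $C_*(\widetilde g_i)$ has a free $C_{q+1}\to C_q$ portion whose homology is precisely $P$, with the natural isomorphism $H_{q+1}(\widetilde g_{i-1})\cong H_q(\widetilde g_i)$ from the proof of Theorem~\ref{main} guaranteeing the class is the same $[P]$ at every stage. \emph{Finally}, I would invoke Theorem~\ref{main} directly: the tower so constructed satisfies Conditions a)--d), its obstruction is by definition the image of $(-1)^{q+1}[H_q(\widetilde g_i)]=(-1)^{q+1}[P]$, and a check of the sign convention (recalling Note~2 on the $(-1)^{q+1}$ factor) confirms this equals $[P]\in\widetilde K_0(\mathbb{Z}[G])$, completing the proof.
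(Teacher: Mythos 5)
Your overall strategy is the paper's: take a finite $2$-complex $K'$ with $\pi_1(K')\cong G$, wedge on $r$ copies of $S^2$ where $r$ is the rank of a free module $F\cong P\oplus Q$, and form the \emph{constant} tower whose bonding map is a single self-map $f$ realizing the projection $F\to P$. But two points need repair. First, the ``main obstacle'' you identify---that a chain-level idempotent over $\mathbb{Z}[G]$ may not be realizable by a self-map of a fixed $2$-complex---is illusory in this setup, and the inductive, Lemma~\ref{ferry}-style workaround you sketch in its place is both unnecessary and unspecified. Because the $2$-spheres are wedged on at the basepoint, defining $f$ on each sphere amounts to nothing more than choosing an element of $\pi_2(K)\cong H_2(\widetilde{K})$, which contains $F$ as a direct summand; so \emph{any} $\mathbb{Z}[G]$-endomorphism of $F$---in particular the idempotent with image $P$---is realized on the nose by setting $f|_{K'}=\operatorname{id}$ and sending each sphere to the appropriate element of $\pi_2(K)$. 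No growth of the complexes and no chain-level realization is needed.

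Second, and more seriously, your identification of the obstruction class is asserted rather than computed, and the module you name is the wrong one. From the long exact sequence of the pair $\left(M(\widetilde{f}),\widetilde{K}\right)$ one finds $H_2(\widetilde{f})\cong\operatorname{coker}\left(\widetilde{f}_{\ast}|_{H_2(\widetilde{K})}\right)\cong Q$ and $H_3(\widetilde{f})\cong\ker\left(\widetilde{f}_{\ast}|_{H_2(\widetilde{K})}\right)\cong Q$: it is the complementary summand $Q$, not $P$, that appears as relative homology. One must then pass, by attaching cells of dimensions $3$, $4$ and $5$, to a tower satisfying Conditions a)--d) with $q=5$, and use the dimension-shifting identities from Claim~2 of the proof of Theorem~\ref{main} to conclude $[H_5(\widetilde{g}_i)]=-[H_2(\widetilde{f})]=-[Q]=[P]$, whence $(-1)^{q+1}[H_q(\widetilde{g}_i)]=[P]$. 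Your plan of directly arranging $H_q(\widetilde{g}_i)\cong P$ by hand-built cell attachments skips exactly this bookkeeping, which is where the sign and the $P$-versus-$Q$ distinction live. You also flag but never carry out the verification that the constant tower has stable $\operatorname{pro}$-$\pi_k$ for all $k$; for $k=2$, for instance, this follows because $f_{\ast}$ restricts to the identity on its image.
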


By letting $Z=\underleftarrow{\lim}\left\{  K_{i},f_{i}\right\}  $ we
immediately obtain:

\begin{proposition}
Let $G$ be a finitely presentable group and $P$ a finitely generated
projective $%
\mathbb{Z}
\left[  G\right]  $ module. Then there exists a compact connected
$2$-dimensional pointed compactum $Z$, with stable $\operatorname*{pro}$%
-$\pi_{k}$ for all $k$ and $\check{\pi}_{1}\left(  Z\right)  \cong G$, such
that $\omega\left(  Z\right)  =\left[  P\right]  \in\widetilde{K}_{0}\left(
\mathbb{Z}
\left[  G\right]  \right)  $.
\end{proposition}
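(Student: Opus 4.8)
As the paper notes, the second (compactum) statement is immediate once the first is in hand: one sets $Z=\varprojlim\{K_i,f_i\}$ and transports the invariants through the pro-isomorphism between $\{K_i,f_i\}$ and a tower associated to $Z$. So the substance is the construction of the tower, and that is what I would focus on. The plan is to realize $[P]$ by a \emph{constant} tower whose single bonding map is a homotopy idempotent having $P$ as its ``image.'' First I would fix a finite presentation $2$-complex $K$ with $\pi_1(K)\cong G$, available since $G$ is finitely presentable, and write $P\cong\operatorname{im}(e)$ for an idempotent matrix $e\in M_n(\mathbb{Z}[G])$, so that $\mathbb{Z}[G]^n\cong P\oplus\operatorname{im}(1-e)$. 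Setting $W=K\vee\bigvee_{j=1}^{n}S^2_j$ gives a finite pointed $2$-complex with $\pi_1(W)\cong G$ and $\pi_2(W)\cong H_2(\widetilde W)\cong\pi_2(K)\oplus\mathbb{Z}[G]^n$, the wedge of spheres contributing the free summand. I would then build a cellular pointed self-map $\phi\colon W\to W$ that is the identity on $K$ and realizes $e$ on the sphere summand, sending each $S^2_j$ to a representative of $\sum_k e_{kj}[S^2_k]\in\pi_2(W)$ (using based loops to realize the $\pi_1$-action). On the cellular chains of the universal cover $\phi_\ast$ is then literally the idempotent $\operatorname{id}\oplus e$ in degree $2$ and the identity in degrees $0,1$; since a pointed map out of a $2$-complex is determined up to homotopy by its effect on $\pi_1$ and $\pi_2$, this makes $\phi$ a homotopy idempotent. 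My tower is the constant tower $K_i=W$, $f_i=\phi$.

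To verify the hypotheses, I would split the homotopy idempotent. Idempotents split in the homotopy category of CW complexes, so there are maps $r\colon W\to X$, $s\colon X\to W$ with $rs\simeq\operatorname{id}_X$ and $sr\simeq\phi$; then $X$ is dominated by the finite complex $W$, hence finitely dominated, and $\pi_1(X)\cong G$. Because $\phi s=srs\simeq s$ and $r\phi=rsr\simeq r$, the maps $(r,s)$ assemble into a ladder exhibiting $\{W,\phi\}$ as pro-homotopy equivalent to the constant tower on $X$. Consequently each $\operatorname{pro}$-$\pi_k(\{W,\phi\})$ is pro-isomorphic to the constant sequence $\{\pi_k(X),\operatorname{id}\}$, hence stable, and $\check\pi_1(\{W,\phi\})\cong\varprojlim\{G,\operatorname{id}\}\cong G$.

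It remains to identify $\omega(\{W,\phi\})$ with $[P]$. Since $\phi_\ast$ is a genuine idempotent chain self-map of $C_\ast(\widetilde W)$, its image is a subcomplex $C_\ast(\widetilde X)$ of finitely generated projectives, free in degrees $0,1$ and isomorphic to $\pi_2(K)\oplus P$ in degree $2$; the finiteness obstruction of $X$ is the alternating sum of their classes in $\widetilde K_0(\mathbb{Z}[G])$, which collapses to $(-1)^2[P]=[P]$ because every other term is free. The step I expect to be the main obstacle is reconciling this low-dimensional computation with the intrinsic $\omega$, which \thmref{main} defines only after improving the tower (via \lemref{ferry}) to have $(q-1)$-connected bonding maps in a large dimension $q$ and inserting the factor $(-1)^{q+1}$. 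The tool is the well-definedness argument (Claim 2 in the proof of \thmref{main}): raising the dimension by attaching cells along a generating set for the relevant projective changes the recorded class only by a sign, and the factor $(-1)^{q+1}$ is chosen precisely to cancel it. Tracing that invariance down to the bottom dimension—where the projective that appears is exactly $P$—should give $\omega(\{W,\phi\})=[P]$; pinning down the sign conventions so that the answer is $[P]$ rather than $-[P]$ is the delicate part.

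Finally, for the stated compactum result I would set $Z=\varprojlim\{W,\phi\}$. As an inverse limit of connected finite $2$-complexes, $Z$ is a connected compact metric space of shape dimension at most $2$, pointed by the common basepoint fixed by $\phi$. By the identification recalled in \secref{Subsection: shapes of compacta} and its footnote, the tower $\{W,\phi\}$ is pro-isomorphic to a tower of finite complexes associated to $Z$, so every invariant computed above transports: $Z$ has stable $\operatorname{pro}$-$\pi_k$ for all $k$, $\check\pi_1(Z)\cong G$, and $\omega(Z)=[P]$.
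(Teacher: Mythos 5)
Your tower is the paper's tower in different notation: the paper picks $Q$ with $F=P\oplus Q$ free of rank $r$, wedges $r$ two-spheres onto a presentation complex $K'$, and takes the constant tower on the self-map that is the identity on $K'$ and the projection $P\oplus Q\to P$ on the free summand of $\pi_2$ --- exactly your idempotent $e$. Where you diverge is in the verification, and that is where the gap sits. Splitting the homotopy idempotent to produce a finitely dominated $X$ is an avoidable detour: stability of every $\operatorname{pro}$-$\pi_k$ is immediate because $f_\ast$ is idempotent, so the images $f_\ast(\pi_k)$ stabilize after one step. More seriously, your actual computation of the obstruction is a computation of Wall's finiteness obstruction $\sigma(X)$ from the image subcomplex of $C_\ast(\widetilde W)$ (whose degree-$2$ term is $C_2(\widetilde K)\oplus P$ with $C_2(\widetilde K)$ a free chain group, not $\pi_2(K)\oplus P$). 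Nothing in this paper identifies $\omega(\{W,\phi\})$ with $\sigma(X)$; the paper is deliberately structured so that Wall's theory is a corollary rather than an input, so that identification cannot be the endpoint of the argument here.

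The step you flag as delicate is therefore the whole remaining content, and the way the paper closes it is worth internalizing because the projective that actually appears at the bottom is $Q$, not $P$. The only nonvanishing relative homology of $\widetilde f$ is $H_2(\widetilde f)\cong Q\cong H_3(\widetilde f)$ (the cokernel and kernel of the projection onto $P$ on the free summand). Improving the tower via \lemref{ferry} attaches cells of dimensions $3,4,5$ (here $q=\dim+3=5$), and each raise of dimension applies the Claim 2 relation $\left[H_{q+1}(\widetilde g^{\,\prime}_i)\right]=-\left[H_q(\widetilde g_i)\right]$; three applications give $\left[H_5(\widetilde g_i)\right]=-\left[H_2(\widetilde f)\right]=-[Q]=[P]$, whence $\omega=(-1)^{5+1}\left[H_5(\widetilde g_i)\right]=[P]$. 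Equivalently, the dimension-independent value is $(-1)^{2+1}[Q]=[P]$: it is the odd sign together with the complementary summand $Q$ that produces $[P]$, not an appearance of $P$ itself as relative homology. Writing out those few lines in place of the appeal to $\sigma(X)$ would complete your proof; the rest of your argument (the construction, the stability check, and the passage to $Z=\varprojlim\{W,\phi\}$) matches the paper.
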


\begin{proof}
Let $Q$ be a finitely generated projective $%
\mathbb{Z}
\left[  G\right]  $ module representing $-\left[  P\right]  $ in
$\widetilde{K}_{0}\left(
\mathbb{Z}
\left[  G\right]  \right)  $, and so that $F=P\oplus Q$ is finitely generated
and free. Let $r$ denote the rank of $F$. Let $K^{\prime}$ be a finite pointed
$2$-complex with $\pi_{1}\left(  K^{\prime}\right)  \cong G$, then construct
$K$ from $K^{\prime}$ by wedging a bouquet of $r$ $2$-spheres to $K^{\prime}$
at the basepoint. Then $\pi_{2}\left(  K\right)  \cong H_{2}\left(
\widetilde{K}\right)  $ has a summand isomorphic to $F$ which corresponds to
the bouquet of $2$-spheres. Define a map $f:K\rightarrow K$ so that
$f\mid_{K^{\prime}}=\operatorname*{id}$ and $f_{\ast}:\pi_{2}\left(  K\right)
\rightarrow\pi_{2}\left(  K\right)  $ (or equivalently $\widetilde{f}_{\ast
}:H_{2}\left(  \widetilde{K}\right)  \rightarrow H_{2}\left(  \widetilde
{K}\right)  $) is the projection $P\oplus Q\rightarrow P$ when restricted to
the $F$-factor. Note that $H_{2}\left(  \widetilde{f}\right)  \cong Q\cong
H_{3}\left(  \widetilde{f}\right)  $. Obtain the tower $\left\{  K_{i}%
,f_{i}\right\}  $ by letting $K_{i}=K$ for all $k\geq0$ and $f_{i}=f$ for all
$k\geq1$.

To calculate $\omega\left(  \left\{  K_{i},f_{i}\right\}  \right)  $ according
to the proof of Theorem \ref{main}, we must attach cells of dimensions $3,4$
and $5$ to each $K_{i}$ to obtain an equivalent tower $\left\{  L_{i}%
,g_{i}\right\}  $ satisfying Conditions a)-d) of the proof. As we saw in Claim
2 of Theorem \ref{main}, this procedure simply shifts homology to higher
dimensions. In particular, $[H_{5}\left(  \widetilde{g}_{i}\right)
]=-[H_{2}\left(  \widetilde{f}\right)  ]=\left[  P\right]  $, as desired.
\end{proof}

\end{document}